\newtheorem{theorem}{Theorem}
\newtheorem{corollary}{Corollary}
\newtheorem{lemma}{Lemma}
\newtheorem{proposition}{Proposition}
\newtheorem{definition}{Definition}
\newtheorem{prob}{Problem}
\newtheorem{remark}{Remark}
\newtheorem{example}{Example}
\DeclareMathOperator{\col}{col}
\DeclareMathOperator{\rev}{rev}
\newcommand{\re}{{\mathbb R}}
\newcommand{\ree}{{\mathbb R}}
\newcommand{\imag}{{\rm Im  }}
\newcommand{\nat}{{\mathbb N}}
\newcommand{\n}{{\mathbb N}}
\newcommand{\cA}{{\cal{A}}}
\newcommand{\cL}{{\cal{L}}}
\newcommand{\cW}{{\cal{W}}}
\newcommand{\dmax}{{d_{max}}}
\newcommand{\cD}{{\cal{D}}}
\newcommand{\pmat}[1]{\begin{pmatrix}#1\end{pmatrix}}
\newcommand{\lA}{\mathcal{L}(\cA)}
\newcommand{\syso}{(A,C,\cA)}
\newcommand{\sysc}{(A,B,\cA)}
\begin{document}

\title{{Observability and controllability analysis of linear systems subject to data losses}}

\author{Rapha\"el M. Jungers, Atreyee Kundu, W.P.M.H. (Maurice) Heemels%
	\thanks{R. M. Jungers is with the Cyber-Physical Systems Laboratory at the University of California, Los Angeles (on sabbatical leave from UCLouvain, Belgium). A. Kundu and W.P.M.H. Heemels are with the Control Systems Technology Group, Department of Mechanical Engineering, Eindhoven University of Technology, 5600 MB Eindhoven, The Netherlands. Emails: \texttt{raphael.jungers@uclouvain.be, a.kundu@tue.nl, m.heemels@tue.nl}.}
	\thanks{Raphael is supported by the Communaut\'e francaise de Belgique, by the Belgian Programme on Interuniversity Attraction Poles, and the Fulbright Commission. He is a F.R.S.-FNRS Research Associate.  }
	\thanks{Maurice is supported by the Innovational Research Incentives Scheme under the VICI grant ``Wireless control systems: A new frontier in automation'' (no. 11382) awarded by NWO (Netherlands Organisation for Scientific Research), and STW (Dutch Science Foundation), and the STW project 12697 ``Control based on data-intensive sensing.''}
}

\markboth{}{}

\maketitle

\begin{abstract}
   We provide algorithmically verifiable necessary and sufficient conditions for fundamental system theoretic properties of discrete-time linear systems subject to data losses.  More precisely, the systems in our modeling framework are subject to disruptions (data losses) in the feedback loop, where the set of possible data loss sequences is captured by an automaton.
    As such, the results are applicable in the context of shared (wireless) communication networks and/or embedded architectures where some information on the data loss behaviour is available a priori. 
		
		We propose an algorithm for deciding observability (or the absence of it) for such systems, and show how this algorithm can be used also to decide other properties including constructibility, controllability, reachability, null-controllability, detectability and stabilizability by means of relations that we establish among these properties.     The main apparatus for our analysis is the celebrated Skolem's Theorem from linear algebra. \\Moreover, we study the relation between the model adopted in this paper and a previously introduced model where, {instead of allowing dropouts in the feedback loop, one allows for time-varying delays.}
\end{abstract}
\begin{IEEEkeywords}
\akc{Data losses, Wireless Control, Constrained Switched System, Automaton, Observability, Controllability}
\end{IEEEkeywords}

\IEEEpeerreviewmaketitle
\section{Introduction}
\label{s:introduction}
	
The last decades have witnessed the introduction of many sensing, computing and wireless communication devices in a broad range of control applications leading to networked control and cyber-physical systems. In such systems it is observed that the phenomenon of data losses is rather common as a real-time feedback loop can be disrupted intermittently by packet losses in wireless communication, task deadlines can be missed in shared embedded processors, outliers can be discarded in sensor-harvested data, and so on.  This has caused a vivid interest in the analysis and design of control systems, see, e.g., \cite{Sinopoli_04, Tabbara_07, Pajic_11, D'Innocenzo_13, Gommans_13, Bemporad_ncs, Schenato_07, Smith_03} and the references therein.

As data losses lead to imperfect control updates, it can be expected that fundamental system theoretic properties such as observability, controllability, detectability, stabilizability, etc. are affected. These fundamental notions have played central roles throughout the history of modern control theory and have been studied extensively in the context of finite-dimensional linear systems, nonlinear systems, infinite-dimensional systems, n-dimensional systems, hybrid systems, and behavioral systems. One may refer, for instance, to \cite{Sontag_98} for historical comments and references. Given the importance of these notions on the one hand and the relevance of cyber-physical and networked control systems on the other hand, we believe that observability (and related notions) under packet loss will be critical in forthcoming engineering applications.

Figures \ref{fig:obsv_loss} and \ref{fig:contr_loss} illustrate two situations of intermittent data losses that are of interest and will be studied in the present paper. It is rather remarkable that these questions have not been answered despite the large literature on systems with packet loss.  The above-mentioned references have introduced (analysis and design) methods that can guarantee stability of the closed-loop systems (up to some conservativeness), but to our knowledge the fundamental properties in dropout systems have not been studied from a theoretical and algorithmic point of view, as they have been for linear (and, later, for nonlinear) systems. 

\begin{figure}[htbp]
        \begin{center}
            \begin{tikzpicture}[every path/.style={>=latex},base node/.style={double,draw,rectangle,thick}, real node/.style={draw, rectangle,scale = 0.7}, complex node/.style={draw,rectangle,scale = 0.9}]
            \node[complex node]   (a) at (-3,0)  { Plant };
            \node[complex node]   (e) at (2.8,0) { Observer };
            \node[cloud, cloud puffs=12.7, minimum width=2cm, draw] (c) at (0,0) {Network};

            \draw[->] (-4,0) -- (a);
            \draw[->] (e) -- (4,0);
            \draw[->] (a) -- (-1.2,0);
            \draw[->] (1.16,0) -- (e);
        \end{tikzpicture}
        \caption{Network-induced imperfections in the form of intermittent loss of plant output (Section \ref{s:obsv_res}).} \label{fig:obsv_loss} 
        \end{center}
  \end{figure}
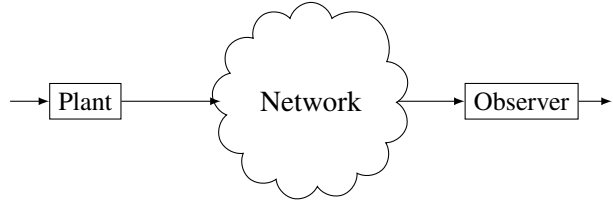

  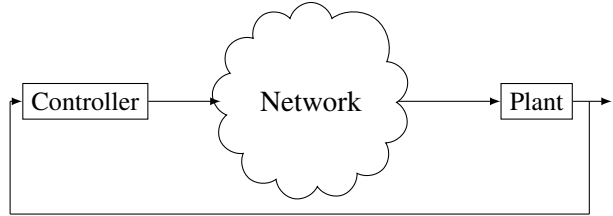
\begin{figure}[htbp]
        \begin{center}
            \begin{tikzpicture}[every path/.style={>=latex},base node/.style={double,draw,rectangle,thick}, real node/.style={draw, rectangle,scale = 0.7}, complex node/.style={draw,rectangle,scale = 0.9}]
            \node[complex node]   (a) at (-3,0)  { Controller };
            \node[complex node]   (e) at (3,0) { Plant };
            \node[cloud, cloud puffs=12.7, minimum width=2cm, draw] (c) at (0,0) {Network};

            \draw (3.7,0) edge (3.7,-1.5);
            \draw (3.7,-1.5) edge (-4,-1.5);
            \draw (-4,-1.5) -- (-4,0);
            \draw[->] (-4,0) -- (a);
            \draw[->] (e) -- (4,0);
            \draw[->] (a) -- (-1.2,0);
            \draw[->] (1.16,0) -- (e);
        \end{tikzpicture}
        \caption{Network-induced imperfections in the form of intermittent loss of control values (Section \ref{s:contr_res}). } \label{fig:contr_loss}
        \end{center}
  \end{figure}
  
Motivated by these considerations, we will address in this paper the problem of studying observability, controllability, detectability, stabilizability and other fundamental properties for discrete-time linear time-invariant (LTI) systems with packet losses represented in Figures \ref{fig:obsv_loss} and \ref{fig:contr_loss}. We will also study the relationships between these notions. We employ a constrained switched systems perspective for modelling the system under consideration. A constrained switched system is a switched system in which the admissible switches are constrained, e.g., it is allowed to switch from mode \(a\) to mode \(b\), but it is not allowed to switch from mode \(b\) to mode \(a\), etc. More precisely, the admissible switches are described by an automaton (see Definition \ref{d:automaton} below for the details). These \emph{constrained switched linear systems} have received considerable attention in recent years \cite{DaAGTS,philippe2015stability,WaRoSOLA,KoTBWF}, and many tools or results that were initially designed for general switched systems are now available for constrained switched systems.

In our setting the switching constraints arise from the consideration that the data loss signal has some known properties that prevent arbitrary switches. More precisely, the language generated by the automaton represents the admissible data loss signals. This setup encapsulates many natural constraints, including the case where there is an upper bound on the maximal number of consecutive dropouts and many others. 

Although for certain classes of switched systems and hybrid systems, questions of controllability, observability and others have been addressed, see, e.g., \cite{Babaali_05, Camlibel_08, Camlibel_07, Sun_02, Kaba_14, Bemporad_00, Xie_03}, there are no general answers to these problems, certainly not in the context of constrained switched linear systems as considered here. Indeed, the complexity of characterizing controllability has been studied by \cite{Blondel_99} for some classes of hybrid systems, and these authors showed that even within quite limited classes of switched systems, there is no algorithm to decide the controllability properties of a given system. However, by exploiting the specific structure of the systems described above, and combining this with deep results from linear algebra and arguments from automata theory, we will be able to answer the questions at hand. 

We build hereby on our preliminary conference paper \cite{Jungers_obsv15} where we tackle the single \emph{observability} problem (under additional restrictions). The analogous problem of \emph{controllability} was studied by us in \cite{Jungers2015controllability}. In the present journal version, we extend these results to various other system theoretic properties, and provide a complete picture of the problem. In particular, we study observability, constructibility, detectability, controllability, reachability, null-controllability and stabilizability of discrete-time linear systems subject to data losses (and their practical variants). We show that all these properties can indeed be verified algorithmically and also reveal various duality relationships between the mentioned notions (and, interestingly, the \emph{absence} of duality in some situations, contrary to the LTI case). In addition, we connect to the recent work in  \cite{JUNGERS_12} in which  the authors studied controllability algorithms for another class of systems, containing switching delays in the feedback loop.  We will study extensively the link between that model and ours in Section \ref{s:swdelay_connxn} below.

The remainder of this article is organized as follows. In Section \ref{s:probformu}, we present our model in details.  Then, in Section \ref{s:obsv_res}, we analyze the observability problem (and the similar constructibility problem) and in Section \ref{s:contr_res}, we analyze the controllability counterpart: the reachability and $0$-controllability problems.  In Section \ref{s:stab_detec_res}, we further generalize the technique to the stabilizability and detectability problems.  Our last theoretical contribution comes in Section \ref{s:swdelay_connxn}, where we take a step back and analyze the connections between our problem and the recently studied controllability with switching delays.  In Section \ref{s:num_ex}, we give a numerical example, and then finish with a brief conclusion.

\section{Problem formulation}
\label{s:probformu}
	In this paper we are primarily interested in studying the following systems and properties:
	\begin{itemize}
		\item Observability of discrete-time linear systems subject to data losses of the form
		\begin{align}
		\label{e:swsys1}
		\begin{aligned}
			x(t+1) &= 	Ax(t),\\
			y(t) &= \begin{cases}
			Cx(t),\:\:&\text{if}\:\sigma(t) = 1,\\
			 \emptyset,\:\:&\text{if}\:\sigma(t) = 0,
			\end{cases}
		\end{aligned}
	\end{align}
	where $x(t)\in\re^{n}$, and $y(t)\in\re^{p}$ are respectively the vectors of states and outputs at time $t\in\nat,$ and $\sigma: \nat\to\{0,1\}$ is the \emph{data loss signal} in the sense that $\sigma(t) = 1$ corresponds to the case where the output is available at time $t\in\nat$, and $\sigma(t) = 0$ corresponds to the case where the output is lost at time $t\in\nat$.
		\item Controllability of discrete-time linear systems subject to data losses of the form
		\begin{align}
		\label{e:swsys2}
		\begin{aligned}
			x(t+1) &= \begin{cases}
				Ax(t) + Bu(t),\:\:&\text{if}\:\sigma(t) = 1,\\
				Ax(t),\:\:&\text{if}\:\sigma(t) = 0,
			\end{cases}		
		\end{aligned}
	\end{align}
	where $x(t)\in\re^{n}$, and $u(t)\in\re^{m}$ are respectively the vectors of states and inputs at time $t\in\nat,$ and $\sigma: \nat\to\{0,1\}$ is the \emph{data loss signal} in the sense that  $\sigma(t) = 1$ corresponds to the case where the input is \rmjj{effectively conveyed to the plant} at time $t\in\nat$, and $\sigma(t) = 0$ corresponds to the case where the input is lost at time $t\in\nat$.
	\end{itemize}

	{For any pair of matrices $(A,C)$ (resp. $(A,B)$), the observability (resp. controllability) problem is relevant only if some additional assumptions are made on the possible dropout behaviour (it is obvious that if $\sigma(t)=0$  for all $t\in\nat$, the system is not observable (resp.~not controllable)). For this reason, our model comes with certain constraints on the classes of data loss signals $\sigma.$} For example, a natural assumption is that there can be at most $\ell\in\nat$ consecutive data losses with $\ell$ given. Such constraints on the data loss signal are often encountered in practice, and arise from the characteristics of the shared (wireless) communication network and/or the embedded architecture. Another type of constraints could be that for every \(m\) consecutive time steps, we receive data in good order on at least \(k\) time steps (so-called $(m,k)$-firmness), where \(m,k\in\nat\) are given parameters \cite{Jia_05, Felicioni_06,horssen_ecc16}.
	
	{All the {above} constraints (and many others) on the data loss signal can be {captured} by an automaton, which is part of the description of the dynamical system. We now formally describe this notion (which slightly deviates from the literature to satisfy our needs).
	
	\begin{definition}
	\label{d:automaton}
		An \emph{automaton} is a pair  $\cA=(M,s)\in \{0,1\}^{N\times N}\times \{0,1\}^{N}$, where $N$ is the \emph{number of \rmjjj{nodes} (or states)}, $M\in \{0,1\}^{N\times N}$ is the \emph{transition matrix}, and $s = (s_1\  s_2  \ \ldots \ s_N)^\top \in \{0,1\}^{N}$ is the \emph{vector of node labels}.
	\end{definition}
		\rmjj{
	\begin{definition}
	\label{d:adm_swsig}
		A data loss signal $\sigma:\, \n \rightarrow \{0,1\}$ is said to be \emph{admissible} with respect to an automaton $\cA=(M,s)$ with $N$ nodes, if there exists a sequence of states $v:\n \rightarrow \{1,2,\dots, N\}$ such that for all  $t \in\nat$ it holds that $M_{v(t),v(t+1)}=1$ and $\sigma(t)=s_{v(t)}$. The set of all admissible data loss signals for the automaton $\cA$ is denoted by $\lA$.	
	\end{definition}}
	In this paper we consider only automata that can produce at least one admissible data loss signal (and thus by definition defined for all $t\in\nat$), which implies that the automaton must contain at least one {\em cycle}. 
	\begin{example}
	\label{ex-maxdropouts}
		Consider a control system where there can be at most $\ell\in\n$ consecutive data losses. This can be captured by an automaton containing $\ell+1$ nodes, in which the node $i\in\{1,2,\cdots,\ell\}$ represents {the situation where the data generated at the} last $i$ consecutive instants were lost, and the one before arrived safely.  The node $\ell+1$ represents {the situation where the} last packet arrived safely. Let $\ell=3$. The corresponding automaton $\cA = (M,s)$ with $M=
 	\begin{pmatrix}
 		0 & 1 & 0 & 1 \\
 		0 & 0 & 1 & 1 \\
 		0 & 0 & 0 & 1 \\
 		1 & 0 & 0 & 1
 	\end{pmatrix}$ and $s=
		\begin{pmatrix}
 			0 \\ 0 \\ 0 \\ 1
 		\end{pmatrix}$ is shown in Figure \ref{fig:automat_ex}.
		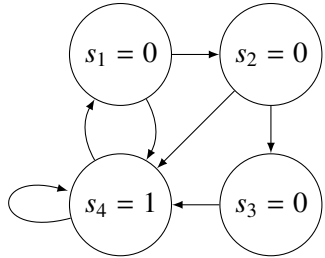
\begin{figure}[h!]
\begin{center}
        \begin{tikzpicture}[every path/.style={>=latex},every node/.style={auto}]
            \node[state]            (a) at (-1,-1)  {$s_4=1$ };
            \node[state]            (b) at (1,-1)  { $s_3=0$ };
            \node[state]            (c) at (-1,1) { $s_1=0 $};
            \node[state]            (d) at (1,1)  {$s_2=0$};

            \draw[->] (a) edge[bend left] (c);
            \draw[->] (c) edge[bend left] (a);
            \draw[->] (a) edge[loop left] (a);
            \draw[->] (b) edge (a);
            \draw[->] (d) edge (b);
             \draw[->] (c) edge (d);
              \draw[->] (d) edge (a);
        \end{tikzpicture}
        \end{center}
\caption{Automaton representing data loss signals with at most $3$ consecutive losses. Labels are represented directly on the nodes.}
\label{fig:automat_ex}
\end{figure}
\end{example}
Combining the description of the dynamics \eqref{e:swsys1} (or \eqref{e:swsys2}) and the automaton $\cA$, our systems can be modeled as so-called \emph{discrete-time constrained switched linear systems}. These systems, where the switching signal is constrained by an automaton, have been the subject of much attention recently \cite{DaAGTS,philippe2015stability,WaRoSOLA,KoTBWF}, see also the introduction. 

	In the sequel we refer to the discrete-time switched systems \eqref{e:swsys1} and \eqref{e:swsys2} constrained by an automaton $\cA$ with the tuples $\syso$ and $\sysc$, respectively.
	Similarly, the notations $x(t,x_{0},\sigma)$ and $y(t,x_{0},\sigma)$ denote the state and output trajectories of the system  \eqref{e:swsys1}  at time $t\in\nat$ generated under a switching signal $\sigma$ with initial state $x_{0}$.
The notation $x(t,x_{0},u, \sigma)$  denotes the state and output trajectories of the system  \eqref{e:swsys2}  at time $t\in\nat$ generated under a switching signal $\sigma$ and control input signal $u:\nat\rightarrow \ree^m$  with initial state $x_{0}$.
 	 However, when there is no ambiguity, we stick to $x(t)$ and $y(t)$ for brevity.

	\begin{definition}
	\label{d:all-defn1}
		(i) The system  $\syso$ is said to be \emph{observable} if for any $\sigma\in\lA$, any pair of initial states $x_{0}$ and $\tilde{x}_{0}\in\re^{n}$, it holds that $y(t,x_{0},\sigma) = y(t,\tilde{x}_{0},\sigma)$ for all $t\in\nat$ implies that $x_{0}=\tilde{x}_{0}$. 
		The system is called \emph{unobservable} otherwise.\\\\
		(ii) The system $\syso$ is said to be \emph{constructible} if for any $\sigma\in\lA$, and any initial state $x_{0}$, there exists a $T\in\nat$ such that for any $\tilde{x}_{0}\in\re^{n}$ that satisfies $y(t,x_{0},\sigma) = y(t,\tilde{x}_{0},\sigma)$ for all $t\in\nat_{[0,T]}$ it holds that $x(T,x_{0},\sigma) = x(T,\tilde{x}_{0},\sigma)$.
		The system is called \emph{non-constructible} otherwise.\\\\
		(iii) The system $\syso$ is said to be \emph{detectable} if for any $\sigma\in\lA$ and any initial state $x_{0}\in\re^{n}$ with $y(t,x_{0},\sigma) = 0$ for all $t\in\nat,$ it holds that $x(t,x_{0},\sigma)\rightarrow 0$ as $t\rightarrow \infty$. The system is called \emph{undetectable} otherwise.
	\end{definition}
	
	\begin{definition}
	\label{d:all-defn2}
		(i) The system $\sysc$ is said to be \emph{controllable} if for any $\sigma\in\lA$, any initial state $x_{0}\in\re^{n}$ and any final state $x_{f}\in\re^{n}$, there is an input signal $u:\nat\rightarrow\re^m$ such that $x(T,x_{0},u,\sigma) = x_{f}$ for some $T\in\nat$. The system is called \emph{uncontrollable} otherwise.\\\\
		(ii) The system $\sysc$ is said to be \emph{reachable} if for any $\sigma\in\lA$, and for the initial state $0\in\re^{n}$ and any final state $x_{f}\in\re^{n}$, there is an input signal $u:\nat\rightarrow\re^m$ such that $x(T,x_{0},u,\sigma) = x_{f}$ for some $T\in\nat$. The system is called \emph{unreachable} otherwise.\\\\
		(iii) The system $\sysc$ is said to be \emph{0-controllable} if for any $\sigma\in\lA$ and any initial state $x_{0}\in\re^{n}$, there is an input signal $u:\nat\rightarrow\re^m$ such that $x(T,x_{0},u,\sigma) = 0$ for some $T\in\nat$. The system is called \emph{non-0-controllable} otherwise.\\\\
		(iv) The system $\sysc$ is said to be \emph{stabilizable} if for any $\sigma\in\lA$ and any initial state $x_{0}\in\re^{n}$,  there is an input signal $u:\nat\rightarrow\re^m$ such that $\lim_{t\rightarrow\infty} x(t,x_{0},u,\sigma) = 0$. The system is called \emph{non-stabilizable} otherwise.
	\end{definition}


To motivate the study of these properties, let us consider an example.

\begin{example}
\label{ex:unobsv}
    Consider the system \eqref{e:swsys1} with
	\[
		A = \pmat{0 & 1\\1 & 0}\:\:\text{and}\:\:C = \pmat{0 & 1}.
	\]
	The observability matrix \eqref{e:swsys1} (in the classical sense, that is, with \(\sigma(t) = 1\) for all \(t\in\nat\)) is given by
		\[
		O(A,C) =  \mh{\pmat{C \\ CA}= \pmat{0 & 1\\1 & 0}}.
	\]
	Notice that the rank of \(O(A,C)\) is 2, and consequently, the pair \((A,C)\) is observable in the classical sense.\\
	\rmjjj{Let us now suppose that the system is subject to dropout signals, as described by the automaton $\cA$ in Figure \ref{fig:automat_ex}. Under the periodic data loss signal \(\sigma = (1,0,1,0,1,0,\ldots) \), which is admissible with respect to $\cA$,} it is easy to see that the system is no longer observable. \\ 
	Now, consider the situation where out of every four consecutive transmissions, at least three are received. \akc{This situation is depicted in Figure \ref{fig:auto_m,k}, where we have an automaton \(\cA = (M,s)\) with \(M = \pmat{0 & 1 & 0 & 0\\0 & 0 & 1 & 0\\0 & 0 & 0 & 1\\1 & 0 & 0 & 1}\) and \(s = \pmat{0\\1\\1\\1}\).} 
In this case, it is easy to see that the observability property would be conserved.
\begin{figure}[h!]
\begin{center}
        \begin{tikzpicture}[every path/.style={>=latex},every node/.style={auto}]
            \node[state]            (a) at (-1,-1)  {$s_4=1$ };
            \node[state]            (b) at (1,-1)  { $s_3=1$ };
            \node[state]            (c) at (-1,1) { $s_1=0 $};
            \node[state]            (d) at (1,1)  {$s_2=1$};

            \draw[->] (a) edge[loop left] (a);
            \draw[->] (b) edge (a);
            \draw[->] (d) edge (b);
             \draw[->] (c) edge (d);
              \draw[->] (a) edge (c);
        \end{tikzpicture}
        \end{center}
\caption{Automaton representing data loss signals where out of every 4 consecutive transmissions, at least 3 are received.}
\label{fig:auto_m,k}
\end{figure}
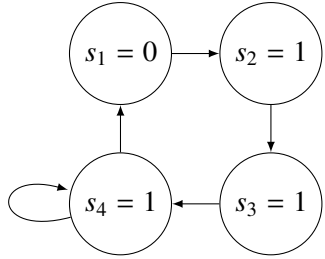
\end{example}


Interestingly, we can characterize the observability and constructibility notions in simpler forms, which will be beneficial later. This can be accomplished by noting that for any $\sigma \in \lA$ fixed, it holds that the systems \eqref{e:swsys1} and \eqref{e:swsys2} are linear (though time-varying). Hence, due to linearity, it holds that $y(t,x_0,\sigma)-y(t,\tilde x_0,\sigma) = y(t,x_0 - \tilde x_0,\sigma)$ and $x(t,x_0,\sigma)-x(t,\tilde x_0,\sigma) = x(t,x_0 - \tilde x_0,\sigma)$. As a consequence, \\\\ (i') {\em the system $\syso$ is {observable} if  and only if for any $\sigma\in\lA$ and any initial state $x_{0}$, it holds that $y(t,x_{0},\sigma) = 0$ for all $t\in\nat$ implies that $x_{0}={0}$.} \\\\
Similarly,  \\\\ (ii') {\em the system $\syso$ is constructible  if and only if for any $\sigma\in\lA$  there exists a $T\in\nat$ such that for any ${x}_{0}\in\re^{n}$ that satisfies $y(t,x_{0},\sigma) = 0$ for all $t\in\nat_{[0,T]}$ it holds that $x(T,x_{0},\sigma) = 0$}.


\begin{remark}
	\label{rem:prac_obsv} We now introduce a variant of our notions, which might be of practical importance, where the critical time $T$ in the definitions does not depend on the particular packet loss signal, nor the initial state.\\
		The system $\syso$ is said to be \emph{practically observable} if there exists a $T\in \nat$ such that for any $\sigma\in\lA$, any pair of initial states $x_{0}$ and $\tilde{x}_{0}\in\re^{n}$ with  $y(t,x_{0},\sigma) = y(t,\tilde{x}_{0},\sigma)$ for all $t\in\nat_{[0,T]},$ it holds that  $x_{0}=\tilde{x}_{0}$. Similarly, $\syso$ is said to be \emph{practically constructible} if there exists a $T\in\nat$ such that for all $\sigma\in\lA$, and all initial states $x_{0}$ and $\tilde{x}_{0}\in\re^{n}$ with $y(t,x_{0},\sigma) = y(t,\tilde{x}_{0},\sigma)$ for all $t\in\nat_{[0,T]}$ it holds that $x(T,x_{0},\sigma) = x(T,\tilde{x}_{0},\sigma)$. Hence, in the `practical' version of the observability and constructibility notions, the time $T$ can be taken independently of $x_0$ and $\sigma$, which is obviously more practical for reconstructing initial states from the output data and the data loss signal. In fact, we will show below that the observability (resp. constructibility) property formulated in Definition \ref{d:all-defn1}-(i) (resp. (ii)) is equivalent to \emph{practical observability} (resp. \emph{practical constructibility}) as defined here. Similar observations apply to the controllability, reachability and $0$-controllability notions for which practical versions  can be defined as follows. The system $\sysc$ is said to be \emph{practically controllable} if there exists a time  $T\in\nat$ such that for any $\sigma\in\lA$, any initial state $x_{0}\in\re^{n}$ and any final state $x_{f}\in\re^{n}$, there is an input signal $u:\nat\rightarrow\re^m$ such that $x(t,x_{0},u,\sigma) = x_{f}$ for some $t\in\nat_{\leq T}.$ It is said to be \emph{practically reachable} if there exists a $T\in\nat$ such that for any $\sigma\in\lA$, and for the initial state $0\in\re^{n}$ and any final state $x_{f}\in\re^{n}$, there is an input signal $u:\nat\rightarrow\re^m$ such that $x(t,x_{0},u,\sigma) = x_{f}$ for some $t\in\nat_{\leq T}$. Finally, the system $\sysc$ is said to be \emph{practically 0-controllable} if there exists $T\in\nat$ such that for any $\sigma\in\lA$ and any initial state $x_{0}\in\re^{n}$, there is an input signal $u:\nat\rightarrow\re^m$ such that $x(t,x_{0},u,\sigma) = 0$ for some $t\in\nat_{\leq T}$.	\end{remark}

Note that all the given properties are invariant under similarity transformations. As such, we have, for instance, $(A,C,\cA)$ is observable if and only $(SAS^{-1}, CS^{-1},\cA)$ is observable, and $(A,B,\cA)$ is controllable if and only if $(SAS^{-1}, SB,\cA)$ is controllable, in which $S\in \ree^{n\times n}$ is an invertible matrix.


\section{Observability results}
\label{s:obsv_res}
	In this section we tackle the following problem:
	\begin{prob}
	\label{p:mainprob1}
		Given a constrained discrete-time switched linear system \eqref{e:swsys1} specified by the triple $\syso$, determine whether the system is (un)observable and (non-)constructible.
	\end{prob}
	We provide algorithmically verifiable necessary and sufficient conditions to the above problem thereby proving its decidability.  In fact, the observability property can be formalized in a completely algebraic way, in terms of an observability matrix, just like in the classical case.
	\begin{definition}
	\label{d:obsv_mat}
		Given a data loss signal $\sigma:\nat\rightarrow \{0,1\}$, we define the \emph{observability matrix} of the system $\syso$ for $\sigma$ at time $t\in\nat$ by
			\begin{align}
			\label{e:obsv_mat}
				O_{\sigma}(t) := \pmat{\sigma(0)C\\\sigma(1)CA\\\vdots\\\sigma(t-1)CA^{t-1}}\in\re^{tp\times n}.
			\end{align}
	\end{definition}
	\begin{proposition}
	\label{p:obsv_mat_rank}
		The system $\syso$ is observable if and only if for all $\sigma\in\lA$ there exists a $t\in\nat$ such that the observability matrix $O_{\sigma}(t)$ is of rank $n$.
	\end{proposition}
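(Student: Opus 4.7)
The plan is to reduce the claim to a statement about the null spaces of the matrices $O_\sigma(t)$ and invoke the equivalent characterization (i') of observability already established in the paper. Specifically, since $y(t,x_0,\sigma) = \sigma(t)CA^t x_0$, the condition that $y(t,x_0,\sigma)=0$ for all $t\in\nat$ is exactly equivalent to $O_\sigma(t)x_0=0$ for all $t\in\nat$. So characterization (i') reads: the system is observable if and only if for every $\sigma\in\lA$ the intersection $\bigcap_{t\in\nat}\ker O_\sigma(t)$ is trivial.

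For the ``$\Leftarrow$'' direction, I would simply note that if some $O_\sigma(t)$ has rank $n$, then $\ker O_\sigma(t)=\{0\}$, so a fortiori $\bigcap_t \ker O_\sigma(t)=\{0\}$, giving observability via (i').

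For the ``$\Rightarrow$'' direction, the key observation is that the sequence of subspaces $\ker O_\sigma(t)\subseteq\re^n$ is non-increasing in $t$, because each $O_\sigma(t+1)$ contains $O_\sigma(t)$ as its top block of rows. Since $\re^n$ is finite-dimensional, this chain of subspaces must stabilize at some index $t^\star\leq n$, i.e., $\ker O_\sigma(t)=\ker O_\sigma(t^\star)$ for all $t\geq t^\star$. Consequently $\bigcap_{t\in\nat}\ker O_\sigma(t)=\ker O_\sigma(t^\star)$. Observability together with (i') now forces $\ker O_\sigma(t^\star)=\{0\}$, i.e., $O_\sigma(t^\star)$ has rank $n$.

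I do not foresee any real obstacle here: the proposition is the straightforward ``time-varying'' analogue of the classical observability rank test, and the only ingredients needed are the equivalent characterization (i') already derived in the paper, linearity of the dynamics in $x_0$, and the finite-dimensional fact that a nested chain of subspaces of $\re^n$ must stabilize. Note that the stabilization index $t^\star$ may of course depend on $\sigma$; a bound uniform in $\sigma$ is a stronger statement (related to practical observability in Remark 1) that is not claimed here.
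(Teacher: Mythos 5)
Your proof is correct and follows essentially the same route as the paper's: both directions reduce to characterization (i') together with the observation that the kernels $\ker O_\sigma(t)$ form a nested, non-increasing chain (the paper phrases necessity as a contradiction, extracting a common nonzero kernel vector $\alpha$ and feeding it in as $x_0$, while you argue directly via stabilization of the chain). One small inaccuracy: the stabilization index $t^\star$ need not satisfy $t^\star\leq n$ (if $\sigma$ begins with many zeros, $\ker O_\sigma(t)$ stays equal to $\re^n$ for that long), but your argument only needs the existence of \emph{some} finite $t^\star$, which follows from the dimensions forming a non-increasing integer sequence, so the proof stands.
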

	
	\begin{proof}
		(Necessity) We employ contradiction. Suppose that the system $\syso$ is observable, but there \mh{exists a $\sigma\in\lA$ and a nonzero $\alpha\in\re^{n}$ such that for all $t$, $O_{\sigma}(t)\alpha = 0$ (note that the null space of $O_{\sigma}(t)$ is a superset of the null space of $O_{\sigma}(t+1)$ for all $t\in\nat$)}. Pick $x(0) = x_{0} = \alpha$. Then $y(t) = 0$ for all $t\in\nat$. Hence, based on (i'), the system $\syso$ is unobservable.\\
		(Sufficiency) Given that for all admissible $\sigma$, there is a $t\in\nat$ such that $O_{\sigma}(t)$ has full rank, $x(0)$ can be {\em uniquely} obtained from $\pmat{y(0)\\\vdots\\y(t)} = O_{\sigma}(t)x_{0}$. The observability of the system $\syso$  follows.
	\end{proof}
	
	Based on this proposition we sometimes call a $\sigma:\nat\rightarrow \{0,1\}$ observable if the corresponding observability matrix has full rank for some $t\in\nat$. Otherwise, we say that the data loss signal $\sigma$ is unobservable.

	Note that this proposition shows that
	\\\\ (i'') {\em the system $\syso$ is {observable} if  and only if for any $\sigma\in\lA$ there is a $T\in\nat$ such that for any initial state $x_{0}$ with $y(t,x_{0},\sigma) = 0$ for $t\in\nat_{[0,T]}$ it holds  that $x_{0}={0}$.} \\\\
	Note that the difference with respect to practical observability lies in the fact that for practical observability $T$ is independent of $\sigma \in \lA$.

	It is well known that for LTI systems, the difference between observability and constructibility lies in the (un)observability of the modes associated with zero eigenvalues.  We now generalize this result to our framework in the next lemma.
	\begin{lemma}
	\label{lem:obsv_and_construc}
	Consider the system $\syso$ and let  $S\in \ree^{n\times n} $ be a real Jordan form basis-transformation matrix such that  $$ SAS^{-1} = \begin{pmatrix} A_r&0\\0 & A_s \end{pmatrix} \text{ and } CS^{-1} = \begin{pmatrix} C_r \ C_s \end{pmatrix},   $$  where $A_r$ is regular, and $A_s$ has only zero eigenvalues.\\ Then, $\syso$ is (practically) constructible if and only if the system ${(A_r,C_r,\cA)}$ is (practically) observable.
	\end{lemma}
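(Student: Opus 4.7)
The plan is to exploit the block-diagonal structure provided by $S$: in the new coordinates the dynamics decouples into an invertible block $A_r$ acting on the regular coordinates $x_r$ and a nilpotent block $A_s$ acting on the singular coordinates $x_s$. Since $A_s$ is in Jordan form with only zero eigenvalues, $A_s^t = 0$ for every $t\ge n$; consequently, for any $t\ge n$ the contribution of $x_s(0)$ to $y(t)=\sigma(t)\bigl(C_rA_r^tx_r(0)+C_sA_s^tx_s(0)\bigr)$ vanishes identically. This decoupling is the crucial tool that will allow us to translate between constructibility of the full system and observability of the regular subsystem, and also explains why the singular part is irrelevant for constructibility: its state goes to zero after $n$ steps no matter what.

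For the implication ``$\syso$ (practically) constructible $\Rightarrow(A_r,C_r,\cA)$ (practically) observable'' I would argue directly, using characterization (i'). Fix $\sigma\in\lA$ and $x_r(0)$ with $\sigma(t)C_rA_r^tx_r(0)=0$ for all $t$. In the full system, start from $x_0=(x_r(0),0)$ in the $S$-coordinates; then $y(t,x_0,\sigma)=0$ for all $t$. Constructibility yields some $T$ with $x(T,x_0,\sigma)=0$, whose regular block is $A_r^Tx_r(0)$, so invertibility of $A_r$ forces $x_r(0)=0$. In the practical case, $T$ is uniform in $\sigma$, so the same $T$ serves as a uniform observability horizon for $(A_r,C_r,\cA)$.

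For the converse ``$(A_r,C_r,\cA)$ observable $\Rightarrow\syso$ constructible'' I need a shift trick to get rid of the contamination coming from $x_s(0)$ during the first $n$ time steps. Given $\sigma\in\lA$, define $\tilde\sigma(t'):=\sigma(t'+n)$; this belongs to $\lA$ because, by Definition \ref{d:adm_swsig}, any path realizing $\sigma$ has a suffix starting at time $n$ that realizes $\tilde\sigma$. Proposition \ref{p:obsv_mat_rank} applied to $(A_r,C_r,\cA)$ and $\tilde\sigma$ provides some $T'$ for which $O^r_{\tilde\sigma}(T')$ has full column rank; set $T:=n+T'$. If $y(t,x_0,\sigma)=0$ on $\nat_{[0,T]}$, then for $t\in\{n,\dots,n+T'-1\}$ the nilpotency of $A_s$ yields $A_s^tx_s(0)=0$, so the identity collapses to $\tilde\sigma(t')C_rA_r^{t'}\bigl(A_r^nx_r(0)\bigr)=0$ for $t'\in\{0,\dots,T'-1\}$. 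Full column rank of $O^r_{\tilde\sigma}(T')$ forces $A_r^nx_r(0)=0$, hence $x_r(0)=0$ by invertibility of $A_r$. Then $x_r(T)=0$ and $x_s(T)=A_s^Tx_s(0)=0$ since $T\ge n$, so $x(T,x_0,\sigma)=0$, giving constructibility via (ii').

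The practical version of this direction is obtained verbatim once one observes that the practical observability horizon $T'$ works uniformly over all admissible signals, and in particular over every left-shift of every $\sigma\in\lA$; then $T:=n+T'$ is a single, uniform constructibility horizon. The only point that demands care is the shift step, namely that $\lA$ is closed under left-shifts, and this is an immediate consequence of Definition \ref{d:adm_swsig}: a tail of an admissible path in $\cA$ is itself an admissible path. I expect the writing-up of the intervals in the shift argument (keeping track of $t$ versus $t'=t-n$ and of the index $n$ coming from the nilpotency of $A_s$) to be the only mildly delicate step; everything else is essentially bookkeeping on top of the nilpotency and the invertibility of $A_r$.
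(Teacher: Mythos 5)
Your proof is correct and follows essentially the same route as the paper's: decompose via the real Jordan form, use nilpotency of $A_s$ to eliminate the singular contribution to the output after $n$ steps, and use invertibility of $A_r$ to pass between $x_r(0)$ and $x_r(T)$. If anything, your explicit left-shift argument (together with the observation that $\lA$ is closed under left-shifts by Definition \ref{d:adm_swsig}) makes the sufficiency direction more rigorous than the paper's, which jumps from ``$y_r(t)=0$ for $t\geq n$'' to ``$x_r(T)=0$'' without spelling out how the observability hypothesis --- stated for output windows starting at time $0$ --- is applied to a window starting at time $n$.
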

	\begin{proof}
	We start by recalling that $(A,C,\cA)$ is (practically) observable/constructible iff $(SAS^{-1},CS^{-1},\cA)$ is (practically) observable/constructible. As such, we can directly work  in the real Jordan basis
and we decompose the state $x\in \ree^n$ as $x=(x_r,x_s): = [ x_r^\top \ x_s^\top ]^\top$ accordingly. Similarly, we write $y_r(t,x_{r} (0),\sigma)$ as the output of the system described by $(A_r,C_r)$, with data loss signal $\sigma \in \lA$ and initial state $x_r(0)$, and $y_s(t,x_{r} (0),\sigma)$ as the output of the system given by $(A_s,C_s)$, data loss signal $\sigma \in \lA$
and initial state $x_s(0)$. Observe that due to the diagonal structure of $SAS^{-1}$ it holds that $y(t,x(0),\sigma) = y_r(t,x_{r} (0),\sigma)+y_s(t,x_{s} (0),\sigma)$ for all $t\in\nat$, where
$x(0)=(x_r(0),x_s(0))$.

(Sufficiency) Suppose first that ${(A_r,C_r,\cA)}$ is observable
and thus according to property (i'') we have that for all $\sigma \in \lA$ there is a $\tilde T(\sigma)$ such that for  any $x_{r}(0)$ with $y_r(t,x_{r} (0),\sigma)=0$ for $t\in\nat_{[0,\tilde T]}$ it holds  that $x_r(0)={0}$. To prove constructibility of $(A,C,\cA)$ take now $\sigma \in\lA$ and define $T=\tilde T(\sigma) +n.$ Suppose that $y(t,x(0),\sigma)=0$ for all $t\in\nat_{[0,T]}.$ Since for all $t\geq n,$ $x_s(t,x_s(0),\sigma) = 0,$ and thus $y_s(t,x_{s} (0),\sigma)=0,$ we have that also $y_r(t,x_{r} (0),\sigma)=0,$ and by observability of this latter system, it implies that $x_r(T)=0.$ Hence, $x(T)=(x_r(T),x_s(T))=0,$ and $\syso$ is constructible.\\
%
Note that if ${(A_r,C_r,\cA)}$ is practically observable, then the $\tilde T$ above is independent of $\sigma$ and thus also the defined $T$. Hence, we can conclude practical constructibility of $\syso$.

 (Necessity) Suppose $\syso$ is constructible.
Take any  $\sigma\in\lA$ and take $T$ as in the constructibility characterisation (ii') such that for any ${x}_{0}\in\re^{n}$ that satisfies $y(t,x_{0},\sigma) = 0$ for all $t\in\nat_{[0,T]}$ it holds that $x(T,x_{0},\sigma) = 0$. Take $x_r(0)$ arbitrary and set $x_0 = (x_r(0),0)$. Suppose $y_r(t,x_r(0),\sigma) =0$ for all $t\in\nat_{[0,T]}$. Then, we have that $y(t,x_{0},\sigma) = 0$ for all $t\in\nat_{[0,T]}$ as well (as $x_s(0)=0$) and thus according to the constructibility characterisation (ii') it holds that $x(T,x_{0},\sigma) =(A_r^T x_r(0), 0)= 0$. Using invertibility of $A_r^T$ yields now $x_r(0)=0$. Hence, ${(A_r,C_r,\cA)}$ is observable. The implication practical constructibility of $\syso$ $\Rightarrow$ practical observability ${(A_r,C_r,\cA)}$ follows similarly as in the sufficiency part by observing that $T$ can be taken independent of $\sigma$.

	\end{proof}

		\begin{corollary}
	\label{p:obsv_and_construc}
		If the system $\syso$ is observable, then it is constructible. Moreover, if $A$ is invertible, then constructibility and observability are equivalent.
	\end{corollary}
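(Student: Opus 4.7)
Both statements are reduced quickly to Lemma~\ref{lem:obsv_and_construc}, so the plan is to exploit the Jordan-block decomposition $SAS^{-1}=\mathrm{diag}(A_r,A_s)$, $CS^{-1}=(C_r\ C_s)$ used there. Since observability and constructibility are invariant under the similarity transformation $S$, I will work directly in the transformed coordinates and decompose initial states as $x_0=(x_r(0),x_s(0))$.

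For the first claim, I would show that observability of $\syso$ implies observability of the reduced system $(A_r,C_r,\cA)$; Lemma~\ref{lem:obsv_and_construc} then yields constructibility of $\syso$ immediately. The key step is this: take any $\sigma\in\lA$ and suppose $x_r(0)\in\ree^{\dim A_r}$ satisfies $y_r(t,x_r(0),\sigma)=0$ for all $t\in\nat$. Set $x_0=(x_r(0),0)$; since $y_s(t,0,\sigma)=0$ and $y(t,x_0,\sigma)=y_r(t,x_r(0),\sigma)+y_s(t,0,\sigma)$, we get $y(t,x_0,\sigma)=0$ for all $t\in\nat$. Characterisation (i') forces $x_0=0$, and in particular $x_r(0)=0$; hence $(A_r,C_r,\cA)$ is observable, and by Lemma~\ref{lem:obsv_and_construc}, $\syso$ is constructible.

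For the second claim, if $A$ is invertible then $A$ has no zero eigenvalues, so the $A_s$ block in the real Jordan decomposition is absent (of dimension zero). Thus $A_r=SAS^{-1}$ and $C_r=CS^{-1}$, and the reduced system $(A_r,C_r,\cA)$ coincides (up to similarity) with $\syso$ itself. Lemma~\ref{lem:obsv_and_construc} then reads: $\syso$ is constructible if and only if $\syso$ is observable.

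I do not anticipate any serious obstacle: the only non-trivial point is the ``padding by zero'' trick in the first part, which uses the block-diagonal structure to isolate the $A_r$-dynamics from the output; everything else is a bookkeeping consequence of Lemma~\ref{lem:obsv_and_construc} together with the invariance of the definitions under similarity transformations.
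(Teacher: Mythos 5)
Your proof is correct, but for the first claim you take a different route than the paper. The paper proves ``observable $\Rightarrow$ constructible'' directly from the characterisations (i'') and (ii'): observability gives, for each $\sigma\in\lA$, a time $T$ such that $y(t,x_0,\sigma)=0$ on $\nat_{[0,T]}$ forces $x_0=0$, and then $x(T,x_0,\sigma)=A^Tx_0=0$ trivially, which is exactly constructibility --- no Jordan decomposition needed. You instead pass through the reduced system: you show observability of $\syso$ implies observability of $(A_r,C_r,\cA)$ via the ``pad with $x_s(0)=0$'' argument (which is sound, relying on the output decomposition $y=y_r+y_s$ established inside the proof of Lemma~\ref{lem:obsv_and_construc}), and then invoke that lemma to conclude constructibility. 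Your route is valid and has the aesthetic merit of deriving both halves of the corollary from the single Lemma~\ref{lem:obsv_and_construc}, but it is heavier machinery than necessary for the first implication; the paper's one-line argument avoids any appeal to the decomposition. For the second claim (equivalence when $A$ is invertible) your argument coincides with the paper's: the nilpotent block is absent, so $(A_r,C_r,\cA)$ is $\syso$ itself up to similarity and the lemma gives the equivalence directly.
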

	\begin{proof}
		For the first part of the corollary, it follows directly from the observability characterisation (i'') and the constructibility characterisation (ii') (as $x(T,x_0,\sigma)=A^T x_0$). The second part of the corollary is obvious from Lemma \ref{lem:obsv_and_construc}.
		\end{proof}
	
	%
	%

We now turn to the main result of this section, which will allow us to decide observability.
	As announced above, we will make use of the \emph{Skolem's Theorem} \cite{Skolem_34,lech1953note} of Linear Algebra, which is given next.
	\begin{theorem}
	\label{t:skolem_thm}
	{		Consider a matrix $A\in{\mathbb R}^{n\times n}$ and two vectors $b,c\in{\mathbb R}^{n}.$} The set of values of $t\in\nat $ such that $c^\top A^{t}b = 0$ is eventually periodic in the sense that there exist two numbers $P,T\in\nat$ such that
		\begin{equation}\label{eq-thm-skolem}
			\forall t\in\nat_{>T},\:\: c^\top A^{t}b = 0\:\Leftrightarrow\:c^\top A^{t+P}b = 0.\end{equation}
		Moreover, the period $P$ can be computed explicitly.  In particular, if the entries in $A$, $b$, $c$ are rational numbers, $P$ can be chosen smaller or equal to $r^{n^2}$, where $r$ is any prime number not dividing $\det(mA)$, and $m$ is the least common multiple of the denominators of the entries in $A.$
	\end{theorem}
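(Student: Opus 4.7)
The plan is to proceed via the classical $p$-adic approach of Skolem, Mahler, and Lech, applied to the scalar linear recurrence sequence $u_t := c^\top A^t b$. First, I would observe that by the Cayley–Hamilton theorem the sequence $(u_t)_{t\in\nat}$ satisfies a linear recurrence of order at most $n$ with constant coefficients, and without loss of generality I may clear denominators by working with $mA$, $mb$, $mc$, so that everything lives over $\mathbb{Z}$. The question thus reduces to showing that the zero set of an integer linear recurrence sequence is eventually periodic.

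Next I would choose a prime $r$ not dividing $\det(mA)$. This ensures that $mA$ is invertible modulo $r$, i.e.~its reduction lies in $GL_n(\mathbb{F}_r)$. Since $|GL_n(\mathbb{F}_r)| = \prod_{i=0}^{n-1}(r^n - r^i) < r^{n^2}$, the order of $mA$ in this group is at most $r^{n^2}$, so there exists $P \leq r^{n^2}$ with $(mA)^P \equiv I \pmod{r}$, and hence $A^P \equiv (m^{-P}) I \pmod{r}$ in $\mathbb{Z}_r$; after absorbing the scalar $m^{-P}$ into a variable change I obtain $A^P = I + r N$ for some matrix $N$ with entries in $\mathbb{Z}_r$ (the ring of $r$-adic integers, enlarged to a finite extension if necessary so that a full Jordan form is available).

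Now I would split the sequence $(u_t)$ according to residues modulo $P$: for each fixed $s \in \{0,\ldots,P-1\}$, consider $v_k^{(s)} := u_{s + kP} = c^\top A^s (A^P)^k b = c^\top A^s (I + rN)^k b$. The crucial step is to interpret $(I + rN)^k$ via the $r$-adic binomial expansion
\begin{equation*}
(I + rN)^k \;=\; \sum_{j=0}^{\infty} \binom{k}{j}\, r^j N^j,
\end{equation*}
which converges for every $k \in \mathbb{Z}_r$ because the factor $r^j$ forces the terms to go to $0$ in the $r$-adic norm. Consequently, for each fixed $s$, the map $k \mapsto v_k^{(s)}$ extends to an $r$-adic analytic function $f_s : \mathbb{Z}_r \to \mathbb{Q}_r$ (or a suitable extension) whose values on the positive integers coincide with the subsequence of interest.

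The final step, and the main technical obstacle, is to invoke Strassmann's theorem (or equivalently the fact that a nonzero $r$-adic power series convergent on $\mathbb{Z}_r$ has only finitely many zeros there) for each $f_s$. This yields the dichotomy: either $f_s \equiv 0$, in which case $u_{s + kP} = 0$ for every $k \in \nat$, or $f_s$ vanishes on at most finitely many $k$. Taking $T$ larger than all such finite exceptional indices over the $P$ residue classes yields the desired equivalence \eqref{eq-thm-skolem}. The bound $P \leq r^{n^2}$ has already been secured in the group-theoretic step above. The delicate part throughout is justifying the passage to $r$-adic analysis and verifying the hypotheses of Strassmann's theorem uniformly in $s$; everything else (Cayley–Hamilton, scaling to integers, the counting $|GL_n(\mathbb{F}_r)| < r^{n^2}$) is routine bookkeeping.
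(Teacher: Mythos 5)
The paper offers no proof of this theorem: it is imported from Skolem and Lech \cite{Skolem_34,lech1953note} and used as a black box, so there is no in-paper argument to compare yours against. Your outline is the standard $r$-adic Skolem--Mahler--Lech proof, and its skeleton is correct and does deliver both conclusions: reduce $u_t=c^\top A^t b$ to an integer linear recurrence by Cayley--Hamilton and scaling; pick a prime $r\nmid\det(mA)$ so that $mA\bmod r\in GL_n(\mathbb{F}_r)$ and let $P$ be its order there, so $P$ divides $|GL_n(\mathbb{F}_r)|=\prod_{i=0}^{n-1}(r^n-r^i)<r^{n^2}$; interpolate each subsequence $k\mapsto c^\top (mA)^s\bigl((mA)^P\bigr)^k b$ to an $r$-adic analytic function of $k\in\mathbb{Z}_r$; and apply Strassmann to get ``identically zero or finitely many zeros'' on each residue class, which is exactly the eventual periodicity \eqref{eq-thm-skolem} with this $P$.

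Two steps need more care than you give them. First, the theorem is stated for \emph{real} $A,b,c$, and ``clear denominators'' is only available for rational data; for genuinely real entries one needs Lech's embedding lemma (a finitely generated field of characteristic zero embeds in some $\mathbb{Q}_p$ so that finitely many prescribed nonzero elements, here $\det A$, become $p$-adic units). As written, your argument proves only the rational case -- which is admittedly where the explicit bound is claimed and all the paper ever uses. Second, convergence of $\sum_j\binom{k}{j}r^jN^j$ for $k\in\mathbb{Z}_r$ only gives a continuous (Mahler) interpolation; Strassmann requires a power series in $k$ with coefficients tending to zero. Rewriting $\binom{k}{j}=k(k-1)\cdots(k-j+1)/j!$ costs a factor $1/j!$ of valuation up to $j/(r-1)$, so the coefficient of $k^i$ has valuation at least $j(r-2)/(r-1)$ summed over $j\ge i$: this tends to infinity when $r\ge 3$ but gives nothing when $r=2$. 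For $r=2$ one must replace $P$ by $2P$ so as to work with $I+4N'$; the bound survives because $|GL_n(\mathbb{F}_2)|=2^{n^2}\prod_{j=1}^n(1-2^{-j})\le 2^{n^2-1}$, hence $2P\le 2^{n^2}$. With those two repairs your proof is complete.
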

	%
	%
The above theorem shows that if one is interested in the patterns of times such that the response of a linear system $x(t+1)=Ax(t)$ with $x(0)=b$ is confined to a linear subspace $\ker c^\top :=\{w \in \ree^n \mid c^\top w = 0\}$,  then  one can restrict the attention eventually to periodic patterns.  

We now present our main result.  In the course of its proof, we will need the following elementary result, which connects observability and the practical versions (see Remark~\ref{rem:prac_obsv}).

	\begin{lemma}
	\label{cor-practical-obs}
				The system $\syso$ is practically observable (resp. practically constructible) if and only if it is observable (resp. constructible).
	\end{lemma}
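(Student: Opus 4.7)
The plan is as follows. The easy direction, namely that practical observability implies observability (and practical constructibility implies constructibility), is immediate from the definitions, since the uniform $T$ witnessing the ``practical'' version can be used as a valid $T(\sigma)$ for each admissible $\sigma$ separately. So the content lies in the converse implications.

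For the main direction, I would attack the observability statement first and reduce constructibility to it via Lemma \ref{lem:obsv_and_construc}. I will argue by contradiction using a compactness (König's lemma) argument on the automaton. Suppose $\syso$ is observable but not practically observable. Using the algebraic reformulation (i'') and the rank characterization of Proposition \ref{p:obsv_mat_rank}, the failure of practical observability means that for every $T \in \nat$ there exists an admissible signal $\sigma_T \in \lA$ whose observability matrix $O_{\sigma_T}(T)$ has rank strictly less than $n$. Translate each such $\sigma_T$ into a length-$T$ initial segment of an automaton path, and collect these segments into a set $B_T$ of ``bad prefixes'' in the tree of admissible prefixes of $\cA$.

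The crucial structural observation is that the rank of $O_\sigma(t)$ is non-decreasing in $t$, so any prefix of a bad length-$(T+1)$ segment is itself a bad length-$T$ segment. Hence $\bigcup_T B_T$ forms an infinite subtree of the tree of admissible node-sequences. Since $\cA$ has finitely many nodes, this subtree has bounded branching. By König's lemma, it contains an infinite branch $(v(t))_{t \in \nat}$ with valid automaton transitions at every step; by Definition \ref{d:adm_swsig} this branch defines an admissible signal $\sigma^* \in \lA$ such that $O_{\sigma^*}(T)$ has rank $<n$ for every $T$. This contradicts observability via Proposition \ref{p:obsv_mat_rank}, closing the argument.

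For the constructibility claim, I would simply chain Lemma \ref{lem:obsv_and_construc} with the observability result just proved, applied to the ``regular part'' subsystem $(A_r,C_r,\cA)$: constructibility of $\syso$ is equivalent to observability of $(A_r,C_r,\cA)$, which by the above is equivalent to practical observability of $(A_r,C_r,\cA)$, which in turn is equivalent to practical constructibility of $\syso$. The main conceptual obstacle I anticipate is justifying the use of König's lemma cleanly, in particular making sure that the infinite branch produced really corresponds to an admissible signal in the sense of Definition \ref{d:adm_swsig} (i.e., an infinite path in $\cA$, not just arbitrarily long finite ones); this relies on the standing assumption that every admissible prefix extends to an infinite admissible path, which follows from the finiteness of the automaton and the hypothesis that it contains at least one cycle.
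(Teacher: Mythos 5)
Your proposal is correct and follows essentially the same route as the paper: the paper also handles the converse direction by contradiction, extracting an infinite admissible signal with永 rank-deficient observability matrices from arbitrarily long bad finite prefixes via compactness on words (the same diagonal/K\"onig-type construction it spells out in the proof of Lemma~\ref{lem:inf_to_fin}), and it likewise reduces the constructibility claim to the observability one through Lemma~\ref{lem:obsv_and_construc}. Your explicit remark that rank-deficiency is inherited by prefixes (so the bad prefixes form a prefix-closed infinite finitely-branching tree) is exactly the monotonicity the paper relies on implicitly.
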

	\begin{proof}	The `only if'-part is obvious. \mh{ For the `if'-part, let us suppose that the system is observable. We have to show that there exists a $T^*\in \nat$ such that for any $\sigma\in\lA$ and any  initial state $x_{0}\in\re^{n}$ with  $y(t,x_{0},\sigma) = 0$ for all $t\in\nat_{[0,T^*]}$ it holds that  $x_{0}=0.$  Suppose by contradiction that for every $T^*\in\nat$ there is a $\sigma^{T^*}$ with $y(t,x_{0},\sigma^{T^*}) = 0$ for all $t\in\nat_{[0,T^*]},$ but  $x_{0}\neq{0}.$ Since $$\pmat{y(0)\\ \vdots\\y(T^*)} = O_{\sigma^{T^*}}(T^*)x_{0}= {0},$$ we conclude that the observability matrix $O_{\sigma^{T^*}}(T^*)$ does not have full rank.  This implies the existence of an infinite signal $\sigma$ such that $O_{\sigma}(t)$ has never full rank, which leads to a contradiction. (The existence of $\sigma$ can be easily proved, e.g., by the convergence of the sequence $\{\sigma^{T^*}\}_{T^*\in\nat}$ w.r.t. the standard topology on words, see the proof of Lemma \ref{lem:inf_to_fin} below for more details.)\\ The equivalence between constructibility and practical constructibility follows now from Lemma~\ref{lem:obsv_and_construc}. }
	\end{proof}

	\begin{theorem}
	\label{t:mainres1}
		Problem \ref{p:mainprob1} is decidable in the sense that there is an algorithm that decides (un)observability and (non-)constructibility of  a given system specified by $\syso$ in finite time.
	\end{theorem}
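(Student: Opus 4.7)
My plan is as follows. First I would apply Lemma \ref{lem:obsv_and_construc} to reduce constructibility of $\syso$ to observability of the auxiliary triple $(A_r,C_r,\cA)$, where $(A_r,C_r)$ comes from a (computable) real Jordan decomposition of $(A,C)$ that isolates the invertible part of $A$. This reduces the theorem to exhibiting a finite-time algorithm that decides observability of an arbitrary triple $\syso$.

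To decide observability I would run two complementary semi-decision procedures in an interleaved fashion. The first enumerates $T=1,2,\ldots$, and for each $T$ generates every admissible path of length $T$ in $\cA$ (a finite set, of size at most $N^{T+1}$); it outputs \emph{observable} as soon as it finds a $T$ for which every such path $\sigma$ yields $O_\sigma(T)$ of rank $n$. By Lemma \ref{cor-practical-obs} this procedure halts whenever the system is observable. The second procedure enumerates admissible ultimately periodic (``lasso'') signals $\sigma=\alpha\beta^{\omega}$ of $\cA$ in increasing order of $|\alpha|+|\beta|$, and for each such $\sigma$ tests whether there exists $x_0\neq 0$ with $s_{v(t)}CA^t x_0=0$ for all $t\geq 0$. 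Setting $B:=A^{|\beta|}$ and $y_0:=A^{|\alpha|}x_0$, the Cayley--Hamilton theorem collapses the infinite family of periodic constraints to finitely many (namely $CA^r B^q y_0=0$ for $r=0,\ldots,|\beta|-1$ with $s_{\beta_r}=1$ and $q=0,\ldots,n-1$, together with the initial constraints coming from $\alpha$), so the test reduces to a routine kernel computation; a nontrivial solution makes the procedure output \emph{unobservable}.

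The main obstacle will be to show that the second procedure terminates whenever the system is unobservable; equivalently, that every unobservable system admits a \emph{bounded} ultimately periodic unobservable signal. Here I would invoke Skolem's theorem (Theorem \ref{t:skolem_thm}) as follows. Take witnesses $\sigma\in\lA$, nonzero $x_0\in\re^n$, and a corresponding path $v(\cdot)$ in $\cA$. Applied row-by-row to $C$ with the vector $x_0$, Skolem's theorem implies that $\{t:CA^t x_0=0\}$ is eventually periodic with some period $P^*$ (depending only on $A$) and some preperiod $\tau$. For $t\geq\tau$ the pair $(v(t),t\bmod P^*)$ takes at most $NP^*$ distinct values, so by the pigeonhole principle there exist $\tau\leq t_1<t_2\leq\tau+NP^*$ with $v(t_1)=v(t_2)$ and $P^*\mid(t_2-t_1)$. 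I would then set $\sigma':=\sigma|_{[0,t_1)}\cdot(\sigma|_{[t_1,t_2)})^{\omega}$, which is admissible in $\cA$ because the cycle closes at $v(t_1)=v(t_2)$; and for any $t\geq t_2$ written as $t_1+q(t_2-t_1)+r$ with $0\leq r<t_2-t_1$, one has $t\equiv t_1+r\pmod{P^*}$ and $t_1+r\geq\tau$, so the eventual $P^*$-periodicity gives $CA^t x_0=0$ whenever $CA^{t_1+r}x_0=0$, which is exactly when $\sigma'(t)=1$. Hence $(\sigma',x_0)$ also witnesses unobservability, now with $\sigma'$ ultimately periodic, so the second procedure reaches it after finitely many iterations; interleaving the two procedures therefore yields the claimed decision algorithm.
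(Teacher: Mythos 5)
Your proposal is correct and follows the paper's strategy in all essentials: two interleaved semi-decision procedures, with Lemma~\ref{cor-practical-obs} guaranteeing termination of the ``observable'' branch, Skolem's theorem plus a pigeonhole argument on the pairs $(v(t),\, t \bmod P^*)$ producing an ultimately periodic unobservable witness for the ``unobservable'' branch, Cayley--Hamilton collapsing the periodic test to a finite rank computation, and Lemma~\ref{lem:obsv_and_construc} disposing of constructibility. You deviate in two places, both defensible. First, you apply Skolem's theorem to a single nonzero kernel vector $x_0$, whereas the paper applies it to a whole basis $v_1,\dots,v_r$ of the unobservable subspace; since rank-deficiency of $O_\sigma(t)$ only requires one nonzero vector in its kernel, your lighter version suffices. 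Second, and more substantially, you enumerate lassos $\alpha\beta^{\omega}$ and keep the eventually periodic witness $\sigma'$ as is, whereas the paper strips the prefix to obtain a \emph{purely} periodic signal $\hat\sigma$ --- a step that multiplies by $A^{-t_1}$ and therefore forces a separate treatment of singular $A$ (Part~II of its proof, where observability is shown to force $\lA=\{(1,1,1,\ldots)\}$). Your lasso-based enumeration handles regular and singular $A$ uniformly and makes that case split unnecessary; what the paper's cycle-based routine buys in exchange is a cleaner object to enumerate (cycles of length at most $PN$) and hence the a priori complexity bound discussed in its remark. One shared technicality to be explicit about: your first procedure should range only over finite admissible paths that extend to infinite admissible signals (prune dead-end nodes of $\cA$ beforehand), otherwise a non-extendable finite path with a rank-deficient observability matrix could prevent the ``observable'' branch from halting; the paper leaves this implicit as well.
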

	\begin{proof}
		We first present an algorithm to decide (un)observability of $\syso,$ in the particular case where the matrix $A$ is regular. We then show how it can be adapted for the singular case, and finally we show how to decide constructibility.\\
	{\bf I. Observability algorithm when $A$ is regular:}
	Our algorithm consists of two routines that run in parallel.  The first routine parses every admissible \emph{periodic} data loss signal corresponding to cycles of length $s,$ $s=1,2,\dots$ in the automaton. It then decides whether each of these cyclic signals (based on periodically repeating the cycle) renders the system observable or not (i.e., makes the corresponding observability matrix full rank or not,  which can be checked easily as we will see at the end of the proof).  If at some point a periodic signal is found, which violates the full rank condition of the corresponding observability matrix, this establishes unobservability.\\
In parallel, the second routine in our algorithm generates all admissible switching signals $\sigma$ of increasing
length and checks whether the corresponding observability matrix
$O_\sigma(t)$ has full rank. Following Proposition \ref{p:obsv_mat_rank}, if this property
is satisfied for all admissible signals of a given length, then
the system $\syso$ is observable.

The key is to prove that this algorithm is bound to stop after a finite time.
\mh{In the case that the system is observable, the second subroutine will do so. This follows from Lemma~\ref{cor-practical-obs} as observability and practical observability are equivalent. 
In the case the system is unobservable, we establish in {\bf{claim 1}} that there exists a \emph{cycle} in the automaton, whose corresponding switching signal leads to a rank-deficient observability matrix.  Thus, the first routine of our algorithm, which parses larger and larger cycles, will finally find the critical one, and the algorithm will stop.}
		
		{\bf Claim 1: proof of existence of an unobservable cyclic signal in the unobservable case.} If the system $\syso$ is unobservable, by definition, there is an admissible data loss signal $\sigma$ for which the rank of the observability matrix $O_{\sigma}(t)$ is strictly less than $n$ for all \(t\in\nat\). Hence,  the null spaces $\ker O_{\sigma}(t)$, $t\in\nat$, of the observability matrices   contain a non-trivial subspace ${\cal UO}_\sigma \neq \{ 0\}$ (one could perceive this as the subspace of states indistinguishable from zero). Fix linearly independent vectors $v_{1},\dots, v_{r}$ forming a basis of ${\cal UO}_\sigma$.
		We apply Skolem's Theorem to every pair $v_i,c_j,$ where $c_j$, $j=1,\ldots,p$, are the rows of the matrix $C$, i.e., $C=\begin{pmatrix} c_1 \\ \vdots \\ c_p\end{pmatrix}$. We obtain $T_{i,j},P_{i,j}\in\nat$, where $i=1,2\ldots,r$  and $j=1,\ldots,p$ satisfying \eqref{eq-thm-skolem} with the corresponding vector $b=v_i$ and $c^\top =c_j$.  From this, defining \begin{equation}\label{eqn-T}T=\max_{i,j}{T_{i,j}},\end{equation} and taking $P$ equal to a common multiple of the $P_{i,j},$ we have that
		%
					 					\begin{align}
		\label{eq-thm-skolem'}	
		\forall t\in\nat_{>T}, \forall_{i=1,\ldots,r}  CA^t v_i=0 \:\Leftrightarrow\: \forall_{i=1,\ldots,r} CA^{t+P} v_i=0.
			\end{align}
					
		
		Consider again the unobservable signal $\sigma$ and the corresponding path it generates in the automaton.
By the pigeonhole principle, there are two times $t_1,t_2\in\nat_{> T},$ $t_2-t_1\leq PN$ (recall that $N$ is the number of nodes in the automaton) such that \begin{equation}\label{eq-periodic}t_1=t_2 \mbox{ mod } P, \quad \mbox{and} \quad p_{\sigma}(t_1)=p_{\sigma}(t_2),\end{equation} where $p_{\sigma}(t)$ denotes the node of the infinite path corresponding to $\sigma$ in the automaton $\cA$ at time $t\in\nat$. Indeed, there are only $PN$ different possible values for the couple $(t \mbox{ mod } P,p_{\sigma}(t)).$

\mh{By \eqref{eq-periodic}, the eventually periodic signal $\tilde \sigma$ starting with $(\sigma(0),\sigma(1),\ldots, \sigma(t_1-1))$ and followed by the periodic repetition of $(\sigma(t_1),\ldots,  \sigma(t_2-1))$  corresponds to an admissible data loss signal $\tilde\sigma\in\lA$. Obviously, ${\cal UO}_\sigma \subset \ker O_{\tilde \sigma}(t_2)$ as $\sigma$ and $\tilde \sigma$ coincide for the first $t_2$ time steps.  For   $t \in \nat_{[t_2, t_2+P]}$ terms $\sigma(t) CA^t$ are added to the  observability matrices. However, we know that  $\sigma(t-P) CA^{t-P}v_i = 0$ for all $i$ and thus $\sigma(t-P)= 0$ or $CA^{t-P}v_i=0$ for all $i$. In the former case it holds that  $\sigma(t)=0$ as well due to (almost) periodicity of $\tilde \sigma$. In the latter case we can use \eqref{eq-thm-skolem'} to conclude $CA^{t}v_i=0$ for all $i$ as well. The same statement holds inductively for $t \in \nat_{\geq  t_2+P}$. Hence, all added contributions to the observability matrix satisfy also that for all $i,$ $\sigma(t) CA^{t}v_i = 0$ for $t\in \nat_{\geq t_2}$.
\\ This shows that ${\cal UO}_\sigma \subset \ker O_{\tilde \sigma}(t)$ for all $t\in \nat$ and thus that $\tilde \sigma$ makes the observability matrices rank-deficient. Moreover, also the purely periodic signal $\hat \sigma \in \lA$ obtained by the periodic repetition of the cycle $(\sigma(t_1),\ldots,  \sigma(t_2-1))$ results in rank-deficient observability matrices  (since its rows are a subset of the rows of the initial observability matrix, postmultiplied by the invertible matrix $A^{-t_1}$). }{\bf This proves claim 1.}

   Finally, in order to complete the algorithm we still have to provide a finite test for checking whether a periodic data loss signal of some period $s$ is (un)observable.\\
 For this purpose we now prove {\bf (claim 2)} that \emph{any infinite cyclic} signal $\sigma$ with period $s$ is unobservable if and only if the observability matrix $O_{\sigma}(t)$ is rank-deficient for $t=sn.$  This allows to check the unobservability of the complete signal by only looking at a finite part of it.\\ Obviously, if the signal is unobservable, the finite matrix cannot be full rank.  \\Conversely, if the matrix $O_{\sigma}(sn)$ is rank-deficient, its rows span a subspace $S$ of dimension smaller than $n.$ Consider all the values ${t_i}$ for which $\sigma(t_i)=1, t_i\leq s.$ By periodicity of $\sigma,$ all the nonzero rows in the observability matrix are of the shape $CA^{t_i+ks},k=1,2\dots.$  Now, it is easy to see that $$\mbox{row\ span}(\{CA^{t_i+ks}:k\in \n\})=\mbox{row\ span}(\{CA^{t_i+ks}:1\leq k \leq n\})$$ (apply, e.g. the Cayley-Hamilton theorem for the matrix $A^s$). Hence, $$\mbox{row\ span}(O_\sigma(t))=\mbox{row\ span}(O_\sigma(ns))\; \forall t\geq n,$$ and thus, if $O_\sigma(ns)$ is rank-deficient, the observability matrix will never become full rank. This proves claim 2.

   {\bf Summarizing the proof}, if the system is unobservable, by checking unobservability of each periodic signal corresponding to cycles of increasing length, the algorithm will come across the unobservable cycle after a finite time (as only cycles up to length $PN$ need to be verified) and the algorithm will terminate.  On the other hand, the algorithm checks every admissible switching signal of increasing length.  Thus, if the system is observable, one will come across a finite time $t$ such that all the  observability matrices $O_\sigma(t)$ have full rank (due to equivalence of observability and practical observability), and the algorithm will stop.\\

  {\bf  II. Non-invertible case:}
	
	If $A$ is not regular, the proof is rather simple. Indeed, take $x(0)\neq 0$ such that $Ax(0)=0$. For any $\sigma:\nat \rightarrow \{0,1\}$ with  $\sigma(0)=0$ it follows that $y(t,x(0),\sigma)=0$ for all $t\in\nat$, but $x(0)\neq 0$. Hence, the automaton cannot admit $\sigma \in \lA$ with $\sigma(0)=0$. This shows that observability implies in this case that all node labels are equal to $1$ and thus $\lA =\{(1,1,1,1,1,\ldots)\}$. Clearly, if $\lA =\{(1,1,1,1,1,\ldots)\}$, we have that $\syso$  is observable if and only if $(A,C)$ is observable in the classical sense, which can obviously be checked algorithmically.

%
%
%
%
%
%
%
%
%
%
{\bf III. Constructibility}
We can use Lemma~\ref{lem:obsv_and_construc} and check (non)-constructibility of $\syso$  by (un)observability of the system $(A_r,C_r,\cA),$ which can be constructed in finite time.

	%
	%
	\end{proof}
\begin{remark}\mh{
Note that Skolem's theorem not only allows to design an algorithm to decide observability properties of a system $\syso,$ but it also allows to compute a priori the running time of (a slightly modified version) of the algorithm, when one has computed an effective bound on the quantity $P$ in Skolem's theorem.  Indeed, one can simply run the second routine of the algorithm in this case as it was shown in the proof above that the system is unobservable if and only if there is an `unobservable' periodic signal based on a cycle (period) of length less than $PN$. Hence, one only has to check a finite number of cycles (of length less than $PN$) to conclude on the observability/constructability or the absence of it.  This number is bounded a priori, and one can thus deduce an a priori bound on the computational burden of the algorithm.} \mh{However, as in some cases the computation of $P$ is not so easy (or $P$ might be very large), the algorithm presented in the proof of the above theorem does not rely on the availability of $P$ by exploiting the two routines. }
\end{remark}
	
\section{Controllability results}
\label{s:contr_res}
	In this section we address the following problem:
	
	\begin{prob}
	\label{p:mainprob2}
		Given a constrained discrete-time switched linear system specified by the triple $\sysc$, determine whether the system is (un)controllable, (un)reachable and (non-)0-controllable.
	\end{prob}
	We will see that when the matrix $A$ is regular, the problem can be seen as exactly the `dual' of the observability problem, and hence solved thanks to the technique described above.  However, a bit surprisingly, the duality does not hold when $A$ is singular, and we also describe an algorithm for that case in this section.
	\begin{lemma}
	\label{lem:contr_and_0contr}
	Consider the system $\syso,$ and let $S$ be a real Jordan form basis-transformation matrix such that
	\begin{equation}\label{eq-jordan-form-contr} SAS^{-1} = \begin{pmatrix} A_r&0\\0 & A_s \end{pmatrix},\ SB = \begin{pmatrix}
	B_r \\ B_s
	\end{pmatrix},  \end{equation} where $A_r$ is regular, and $A_s$ has only zero eigenvalues.\\ Then, $\syso$ is 0-controllable if and only if ${(A_r,B_r,\cA)}$ is \mh{0-controllable}.
	\end{lemma}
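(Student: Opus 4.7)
The plan is to mirror the structure of Lemma \ref{lem:obsv_and_construc}. First I would invoke invariance of 0-controllability under similarity transformations (via the change of coordinates $\xi = Sx$, with input unchanged), so that we may work directly in the block-diagonal basis of \eqref{eq-jordan-form-contr}. Decomposing $x = (x_r,x_s)$ accordingly, the dynamics split as
\begin{align*}
x_r(t+1) &= A_r x_r(t) + \sigma(t) B_r u(t),\\
x_s(t+1) &= A_s x_s(t) + \sigma(t) B_s u(t),
\end{align*}
and the crucial structural facts are (i) $A_r$ is invertible, and (ii) $A_s$ is nilpotent, say $A_s^{n_s}=0$ where $n_s$ is the dimension of $A_s$.

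For the necessity direction, given $\sigma\in\lA$ and arbitrary $x_r(0)\in\ree^{n_r}$, I take initial state $x(0):=(x_r(0),0)$. By 0-controllability of $\sysc$, there exist an input $u$ and time $T$ such that $x(T,x(0),u,\sigma)=0$; projecting onto the $r$-component yields $x_r(T,x_r(0),u,\sigma)=0$, which establishes 0-controllability of $(A_r,B_r,\cA)$.

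For the sufficiency direction, which is the more interesting one, I would exploit the nilpotency of $A_s$ to argue by a ``two-phase'' control. Given $\sigma\in\lA$ and initial state $x(0)=(x_r(0),x_s(0))$, 0-controllability of $(A_r,B_r,\cA)$ furnishes an input $u$ together with a time $T_r$ for which $x_r(T_r,x_r(0),u,\sigma)=0$. I then modify $u$ by setting $u(t):=0$ for all $t\geq T_r$. Under this new input, $x_r$ evolves on $[T_r,\infty)$ purely by $x_r(t+1)=A_r x_r(t)$, so $x_r(T_r)=0$ is preserved for all subsequent times. Meanwhile $x_s$ evolves by $x_s(t+1)=A_s x_s(t)$ on the same interval, and since $A_s^{n_s}=0$, we obtain $x_s(T_r+n_s)=0$. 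Therefore $x(T_r+n_s,x(0),u,\sigma)=0$, and $\sysc$ is 0-controllable.

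The main subtlety, and the reason no real obstacle arises here, is that the target state is $0$: combined with the nilpotency of $A_s$, this permits the clean sequential strategy of ``first zero out $x_r$ with active control, then switch the control off and let $x_s$ decay automatically.'' This is why the reduction to the regular subsystem goes through cleanly, even though one should not expect an analogous reduction to hold for reachability or full controllability, where the singular part must reach a possibly nonzero target and cannot be ``waited out'' in this fashion.
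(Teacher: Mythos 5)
Your proof is correct, and it is exactly the argument the paper has in mind: the paper omits the proof of this lemma, stating only that it "follows using similar arguments as in the proof of Lemma \ref{lem:obsv_and_construc}," and your decomposition into the decoupled blocks, the projection argument for necessity, and the use of nilpotency of $A_s$ (switch off the input after zeroing $x_r$ and let $x_s$ die out in at most $n_s$ steps) for sufficiency is precisely that adaptation.
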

	{The proof of the above lemma follows using similar arguments as in the proof of Lemma \ref{lem:obsv_and_construc} and is therefore omitted here. }
	
Just as in the observability problem, the controllability property is related with the space spanned by the columns of a \emph{reachability} matrix.
	\begin{definition}
	\label{d:reach_mat}
		Given a data loss signal $\sigma:\nat \rightarrow \{0,1\}$, we define the \emph{reachability matrix} of the system $\sysc$ at time $t\in\nat$ by
		\begin{align}
		\label{e:reach_mat}
			C_{\sigma}(t) := \pmat{ B\sigma(t-1)   & A B \sigma(t-2) & \ldots   & A^{(t-1)}B\sigma(0)  }\in \re^{n\times mt}.
		\end{align}
		Similarly, we also define the reachability matrix $C_w(t)$ at time $t\in\nat$ for finite signals (words) $w: \{0,1,\ldots, r\} \rightarrow \{0,1\}$ with $r\in\nat_{\geq t-1}$.
	\end{definition}
	
Note that 	for all $t\in \nat$ and $T\in\nat$ it holds that
	\begin{multline} C_\sigma(t+T) = \\ \left( B \sigma(t+T-1) \ AB \sigma(t+T-2) \ \ldots \ A^{T-1} B \sigma (t) \	A^T C_{\sigma}(t) \right), \label{eq:reach_lower_t}
\end{multline}
which will turn out to be a useful identity.

	\begin{proposition}
	\label{p:reach_rank}
		The system $\sysc$ is reachable if and only if for all $\sigma\in\lA$ there exists a $T\in\nat$ such that the reachability matrix $C_{\sigma}(T)$ is of rank $n$.\\		
	\end{proposition}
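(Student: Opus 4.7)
The plan is to fix an arbitrary admissible signal $\sigma \in \lA$ and rephrase reachability as an algebraic condition on the column span of the matrices $C_\sigma(T)$. First I would iterate the dynamics $x(t+1)=Ax(t)+\sigma(t)Bu(t)$ from $x(0)=0$ to obtain
\[
x(T,0,u,\sigma)=\sum_{k=0}^{T-1}\sigma(k)A^{T-1-k}Bu(k)=C_\sigma(T)\,\mathbf{u},
\]
with $\mathbf{u}:=[u(T-1)^\top\ u(T-2)^\top\ \ldots\ u(0)^\top]^\top$. Consequently, $\sysc$ is reachable if and only if, for every $\sigma \in \lA$ and every $x_f \in \ree^n$, there exist $T \in \nat$ and $\mathbf{u}$ with $x_f \in \mathrm{range}\,C_\sigma(T)$.

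The sufficiency direction is then immediate: if for every $\sigma$ some $C_\sigma(T)$ has rank $n$, then $\mathrm{range}\,C_\sigma(T)=\ree^n$, and an appropriate $\mathbf{u}$ can be chosen to hit any prescribed $x_f$.

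For necessity, fix $\sigma$ and set $V_T:=\mathrm{range}\,C_\sigma(T)$. Reachability supplies the set identity $\bigcup_{T\in\nat} V_T=\ree^n$, and the nontrivial task is to upgrade this covering-by-countably-many-subspaces into the existence of a single $T$ with $V_T=\ree^n$. I would invoke the classical fact that a finite-dimensional real vector space cannot be expressed as a countable union of proper linear subspaces. The cleanest route is a Lebesgue-measure argument (each proper subspace has measure zero in $\ree^n$, a countable union of null sets is null, and hence cannot equal $\ree^n$), although a Baire-category argument (each proper subspace is closed and nowhere dense) works equally well. Either forces some $V_T$ to coincide with $\ree^n$.

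The main obstacle is exactly this last step. In the dual observability setting (Proposition~\ref{p:obsv_mat_rank}), the kernels $\ker O_\sigma(t)$ are monotonically decreasing in $t$, so they stabilize in finitely many steps by finite-dimensionality, and the required finite $T$ is obtained \emph{for free}. Here, by contrast, the subspaces $V_T$ are not nested: the identity \eqref{eq:reach_lower_t} only yields $V_{t+T}\supseteq A^T V_t$, and when $A$ is singular a reachable direction at time $t$ can disappear at time $t+1$ (for instance if $\sigma(t)=0$ and $A$ has a nontrivial kernel intersecting $V_t$). This asymmetry with the observability case is precisely what makes the countable-union-of-subspaces argument indispensable, rather than a simple dimension-stabilization argument.
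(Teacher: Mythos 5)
Your proof is correct and follows essentially the same route as the paper: both express $x(T,0,u,\sigma)$ as $C_\sigma(T)\mathbf{u}$ and, for necessity, observe that the set of reachable states is a countable union of the column spans $V_T$, which cannot cover $\re^n$ unless some $V_T$ equals $\re^n$. Your added justification of the countable-union fact (measure or Baire category) and the remark on the non-nestedness of the $V_T$ are correct elaborations of what the paper asserts more tersely.
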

	\begin{proof}
		(Necessity) We proceed by contradiction. Suppose that there is a $\sigma$ such that, for all $t\in\nat,$ the controllability matrix $C_\sigma(t)$ is of rank smaller than $n.$  Let us take  $x_0=0$ as in the definition of reachability (Definition \ref{d:all-defn2}-(ii)).
From \eqref{e:swsys2} we have that \begin{equation} \label{eq-cont} x_{x_0,\sigma,u}(t)=C_\sigma(t) { u^{t-1}}, \end{equation} where $u^{t-1}\in \ree^{mt} = \col(u(t-1),u(t-2),\ldots, u(0)):= [ u(t-1)^\top \ u(t-2)^\top \ \ldots\ u(0)^\top  ]^\top$.  Hence, the set of reachable vectors at time $t\in\nat$ is a strict linear subspace of $\re^n.$ Thus, the set of reachable points, which is a countable union of these strict linear subspaces, cannot be equal to $\re^n.$

 (Sufficiency) Conversely, if for each $\sigma$ there is a $T\in\nat$ such that $C_\sigma(T)$ has full rank, then the system is clearly reachable based on \eqref{eq-cont}.
	\end{proof}
	\begin{proposition}
	\label{p:reach_and_contr} 	\label{p:reach_and_0-contr}
		The system $\sysc$ is controllable if and only if it is reachable. Moreover, reachability implies $0$-controllability and, in case $A$ is  invertible, the two notions are equivalent.
	\end{proposition}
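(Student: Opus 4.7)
The plan is to exploit the linearity identity $x(T, x_0, u, \sigma) = A^T x_0 + C_\sigma(T)\, u^{T-1}$, where $u^{T-1} := \col(u(T-1),\ldots,u(0))$, together with Proposition~\ref{p:reach_rank}, which recasts reachability as: for every $\sigma \in \lA$ there exists $T \in \nat$ with $C_\sigma(T)$ of rank $n$. Under this characterisation each of the three implications reduces to the solvability of a finite-dimensional linear system.

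For \emph{controllability $\Leftrightarrow$ reachability}, I would first note that $\Rightarrow$ is immediate by specialising $x_0 = 0$ in the definition. For $\Leftarrow$, given $\sigma \in \lA$ and a pair $x_0, x_f \in \ree^n$, I would invoke Proposition~\ref{p:reach_rank} to pick $T$ where $C_\sigma(T)$ has rank $n$, and then solve $C_\sigma(T)\, u^{T-1} = x_f - A^T x_0$; the resulting $u$ drives $x_0$ to $x_f$ in $T$ steps. The same computation with $x_f = 0$ immediately delivers \emph{reachability $\Rightarrow$ $0$-controllability}.

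The hard part will be the converse \emph{$0$-controllability $\Rightarrow$ reachability} under the invertibility of $A$. My plan is to introduce, for each fixed $\sigma \in \lA$, the subspaces $S_T := \{\, x_0 \in \ree^n : A^T x_0 \in \mathrm{Im}(C_\sigma(T)) \,\}$ of states that can be steered to zero in exactly $T$ steps. The crucial monotonicity $S_T \subseteq S_{T+1}$ would follow from identity~\eqref{eq:reach_lower_t} applied with one extra step, which yields $\mathrm{Im}(C_\sigma(T+1)) \supseteq A\cdot\mathrm{Im}(C_\sigma(T))$: if $A^T x_0 \in \mathrm{Im}(C_\sigma(T))$ then $A^{T+1} x_0 \in \mathrm{Im}(C_\sigma(T+1))$. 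Finite-dimensionality of $\ree^n$ forces this nondecreasing chain to stabilise at some $S_{T^*}$; by $0$-controllability of the chosen $\sigma$ we have $\bigcup_T S_T = \ree^n$, hence $S_{T^*} = \ree^n$. The invertibility of $A$ enters decisively at this final step, since $A^{T^*}$ is then a bijection and therefore $\mathrm{Im}(C_\sigma(T^*)) = A^{T^*} S_{T^*} = \ree^n$. Proposition~\ref{p:reach_rank} then delivers reachability and closes the circle of equivalences when $A$ is invertible.
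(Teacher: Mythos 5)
Your proof is correct. The first two implications (controllability $\Leftrightarrow$ reachability, and reachability $\Rightarrow$ $0$-controllability) follow the paper essentially verbatim: once Proposition~\ref{p:reach_rank} supplies a $T$ with $C_\sigma(T)$ of rank $n$, solving $C_\sigma(T)\,u^{T-1} = x_f - A^T x_0$ settles both. Where you genuinely diverge is in the converse $0$-controllability $\Rightarrow$ reachability for invertible $A$. The paper argues by contraposition: it writes the set of null-controllable states for a fixed $\sigma$ as $\bigcup_{t\in\nat} -A^{-t}\,\imag C_\sigma(t)$ and observes that if the system is not reachable this is a countable union of \emph{proper} subspaces of $\ree^n$, which cannot exhaust $\ree^n$ (the same cardinality fact it already invokes in the proof of Proposition~\ref{p:reach_rank}). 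You instead argue directly: the chain $S_T = (A^T)^{-1}\imag C_\sigma(T)$ of subspaces is nondecreasing thanks to $\imag C_\sigma(T+1) \supseteq A\,\imag C_\sigma(T)$ --- a correct consequence of \eqref{eq:reach_lower_t} with one extra step --- hence stabilises in finite time; $0$-controllability forces the stabilised value to be all of $\ree^n$, and invertibility of $A^{T^*}$ then transfers full rank to $C_\sigma(T^*)$. Your route is slightly more elementary, replacing the ``countable union of proper subspaces cannot cover $\ree^n$'' fact by a finite-dimensional stabilisation argument, and it makes explicit where monotonicity of the reachable subspaces enters; both arguments isolate the invertibility of $A$ at the same point (making $A^{T}$ a bijection), and both are complete.
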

	\begin{proof}
		The `only if' part is trivial.  Let us prove the `if' part of the first statement.
		Fix $\sigma\in\lA,$ and $x_0,x_f \in \re^n$.
		By Proposition \ref{p:reach_rank}, since the system is reachable, there is a time $t\in\nat$ such that $\text{rank}(C_{\sigma}(t)) = n$. Equivalently,
		\[
			C_{\sigma}(t)u^{t-1} = x_{f} - A^{t}x_{0}
		\]
		has a solution $u^{t-1}$ for any $x_{0}$ and $x_{f}$. The input signal $u:\nat\rightarrow \ree^m$ starting with $(u(0),u(1),\ldots,u(t-1))$ transfers the initial state $x_{0}$ to the final state $x_{f}$ at time $t$. Hence, the system $\sysc$ is controllable.\\
		\mh{
		To prove the second statement, note first that since reachability implies controllability it also implies $0$-controllability. To show the converse in case $A$ invertible, note that	given $\sigma \in \lA$ the set of all states that can be steered to the origin in finite time is given by $\bigcup_{t\in\nat} -A^{-t} \imag C_\sigma(t)$. From this identity it follows that if the system is not reachable then there exists a $\sigma$ such that $ \imag C_\sigma(t)$ is always a strict subspace of $\ree^n$. Hence, the indicated union can never be $\ree^n$ and the system is therefore not $0$-controllable. 	} 		\end{proof}
		
	We will now prove various duality relationships to connect the observability and controllability concepts. The following lemma will be useful in proving the duality relationships.

	\begin{lemma}
	\label{lem:inf_to_fin}
		Consider $\sysc$ with $A$ regular and the set of admissible signals $\cL(\cA)$ as in Definition~\ref{d:adm_swsig}.  Also, consider the set $\cW(\cA)$ of finite words generated by $\cA.$
The following two statements are equivalent:
\begin{enumerate}\item \label{cond1} There exists a signal $\sigma\in \cL(\cA)$ such that for all $t\in\nat$ the corresponding reachability matrix $C_\sigma (t)$ does not have full rank \mh{(i.e., the system is not reachable)}
\item \label{cond2}There exists a sequence $\{w_i\}_{i\in\nat_{\geq 1}}$ of finite words $w_i\in \cW (\cA)$ of length $i$ such that, for all $i\in\nat_{\geq 1}$  the corresponding reachability  matrix  $C_{w_i}(i)$ does  not have full rank \mh{(i.e., the system is not practically reachable)}.
\end{enumerate}
	\end{lemma}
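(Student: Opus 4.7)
The plan is to prove the two implications separately; the forward direction \ref{cond1} $\Rightarrow$ \ref{cond2} is essentially a one-line truncation argument, while the converse \ref{cond2} $\Rightarrow$ \ref{cond1} will rely on a K\"onig's-lemma style extraction combined with the regularity of $A$.

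For \ref{cond1} $\Rightarrow$ \ref{cond2}, given $\sigma\in \cL(\cA)$ with every $C_\sigma(t)$ rank-deficient, I would define $w_i$ to be the length-$i$ prefix of $\sigma$. Truncating a node path realising $\sigma$ certifies $w_i\in \cW(\cA)$, and the identity $C_{w_i}(i)=C_\sigma(i)$ transfers the rank-deficiency directly.

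For \ref{cond2} $\Rightarrow$ \ref{cond1}, I would work at the level of node paths in $\cA$ rather than directly with $\{0,1\}$-sequences. For each $i\in\nat_{\geq 1}$, pick a node path $v_i:\{0,1,\ldots,i\}\to\{1,\ldots,N\}$ realising $w_i$. Since the node set is finite, a standard diagonal / K\"onig's-lemma argument on the finitely branching tree of $\cA$-admissible node paths produces an infinite admissible node path $v:\nat\to\{1,\ldots,N\}$, together with an increasing subsequence $\{i_j\}$ with the property that, for every $k\in\nat$, there exists $j$ with $v_{i_j}(t)=v(t)$ for all $t\in\{0,\ldots,k\}$. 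Setting $\sigma(t)=s_{v(t)}$ then defines an element of $\cL(\cA)$, our candidate infinite signal.

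It remains to check that $C_\sigma(k)$ is rank-deficient for every $k\in\nat$. Fix $k$ and choose $j$ with $i:=i_j\geq k$ and $v_{i_j}|_{\{0,\ldots,k\}}=v|_{\{0,\ldots,k\}}$, so that in particular $C_{w_i}(k)=C_\sigma(k)$. Applying identity \eqref{eq:reach_lower_t} with $t=k$ and $T=i-k$ exhibits $A^{i-k}C_{w_i}(k)$ as the sub-block formed by the last $mk$ columns of $C_{w_i}(i)$; hence $\mathrm{rank}\bigl(A^{i-k}C_{w_i}(k)\bigr)\leq \mathrm{rank}\bigl(C_{w_i}(i)\bigr)<n$. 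The regularity of $A$ then yields $\mathrm{rank}(C_\sigma(k))=\mathrm{rank}(C_{w_i}(k))=\mathrm{rank}\bigl(A^{i-k}C_{w_i}(k)\bigr)<n$, as required. The main obstacle I anticipate is the bookkeeping of the K\"onig extraction while guaranteeing that the limiting path remains admissible (i.e., lies in $\cL(\cA)$ and is not merely an arbitrary element of $\{0,1\}^\nat$). The algebraic step transferring rank-deficiency from $C_{w_i}(i)$ back to $C_{w_i}(k)$ is short once \eqref{eq:reach_lower_t} is noticed, but it is precisely the reason the lemma's hypothesis requires $A$ to be regular.
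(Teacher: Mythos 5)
Your proposal is correct and follows essentially the same route as the paper: prefix truncation for the easy direction, a compactness/K\"onig extraction for the converse, and then the identity \eqref{eq:reach_lower_t} together with the invertibility of $A^{i-k}$ to push rank-deficiency from $C_{w_i}(i)$ down to $C_\sigma(k)$. The only (minor, and arguably favourable) difference is that you run the extraction on node paths of $\cA$ rather than on the letter sequences themselves, which makes the admissibility of the limiting signal $\sigma\in\cL(\cA)$ immediate, whereas the paper builds $\sigma$ letter by letter and leaves that point to the reader.
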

		\begin{proof}
	The `I $\rightarrow$ II'-part is rather obvious since the successive prefixes of $\sigma$ given  by $w_i:=(\sigma(0),\ldots,\sigma(i-1))$ do the job as $C_{w_i}(i) = C_\sigma(i)$. 
	
%

The proof of the other direction essentially comes from compactness of the language generated by an automaton (with the standard topology on words), but we present here a self-contained proof for clarity. Therefore, suppose that there is a sequence $\{w_i\}_{i\in\nat_{\geq 1}}$ of finite words $w_i\in \cW (\cA)$ of length $i$ such that  $C_{w_i}(i)$ does not have full rank.
We now generate based on this sequence an infinite data loss signal $\sigma \in \lA$ as follows. Starting with $k=0$ we iteratively perform the following steps.
Define at step $k\in \nat$ $\sigma(k)$ such that there exists an infinite number of words in $\{w_i\}_{i\in\nat_{\geq 1}}$ with the first $k+1$ letters equal to $(\sigma(0),\sigma(1),\ldots,\sigma(k))$. This procedure can indefinitely be continued as at each step $k\in\nat_{\geq 1}$ $(\sigma(0),\sigma(1),\ldots,\sigma(k-1))$ are given (by induction) and there are an infinite number of words in $\{w_i\}_{i\in\nat_{\geq 1}}$ left with these first $k$ letters (for $k=0$ this holds trivially).
This procedure generates an infinite word $\sigma:\nat \rightarrow \{0,1\}$. Due to \eqref{eq:reach_lower_t} we obtain
\begin{multline} C_{w_i}(i) = \\ \left( B \mh{w_i}(i-1) \ AB \mh{w_i}(i-2) \ \ldots \ A^{i-k-1} B \mh{w_i} (k) \	A^{i-k} C_{\sigma}(k) \right) , \label{eq:reach_lower_t2}
\end{multline}
which holds for all $w_i$ \rmjjj{($i \in\nat_{\geq k}$)} that have the same first $k$ letters as the constructed $\sigma.$ Combining this with $C_{w_i}(i)$ not having full rank and the regularity of $A$, it follows that  for all $k\in\nat$ the corresponding observability matrix $C_{\sigma}(k)$ does not have full rank, which concludes the proof.
	\end{proof}

	We are now in position to establish the duality relationships for which we need the concept of a {\em reverse automaton}.

		\begin{definition}
	\label{d:rev_automaton}
		Let an \emph{automaton}  $\cA=(M,s)\in \{0,1\}^{N\times N}\times \{0,1\}^{N}$ be given as in Definition~\ref{d:automaton}. The reverse automaton is defined by $\cA^{r}=(M^\top,s)$.  	\end{definition}
	
	With this definition, for any $w\in\cW(\cA)$ with length $|w|$, the time-reversed word $\rev(w):\{0,1\ldots, |w|-1\}\rightarrow \{0,1\}$ given for $t\in  \{0,1\ldots, |w|-1\} $ by $\rev(w)(t):= w(|w|-t)$ satisfies $\rev(w) \in \cW(\cA^r)$. Hence, any time-reversed finite word generated by $\cA$ is a  finite word of $\cA^r$ and vice versa (as $(\cA^r)^r=\cA$).
	
	\begin{proposition}
	\label{p:obs_and_contr}
		 If the system $(A,C,\cA)$ is observable then the dual system $(A^\top,C^\top,\cA^{r})$ is controllable, where $\cA^{r}$ is the reverse automaton of $\cA$. Moreover, if $A$ is a regular matrix, then the converse statement also holds.
	\end{proposition}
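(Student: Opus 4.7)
The plan is to build everything on two structural facts. First, from Definitions \ref{d:obsv_mat} and \ref{d:reach_mat}, one checks directly that for any finite word $w$ of length $t$, the observability matrix $O_w(t)$ of $(A,C)$ and the reachability matrix of the dual pair $(A^\top,C^\top)$ under the signal $\rev(w)$ are transposes of one another; in particular, they share their rank. Second, by construction of $\cA^r$, time reversal is a bijection between $\cW(\cA)$ and $\cW(\cA^r)$. Combining these two observations with Propositions \ref{p:obsv_mat_rank}, \ref{p:reach_rank} and \ref{p:reach_and_contr} reduces both implications to statements about the non-existence of an infinite admissible signal along which the appropriate rank stays below $n$.

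For the forward implication (which I show holds unconditionally on $A$), I argue by contrapositive: if $(A^\top,C^\top,\cA^r)$ is uncontrollable, Propositions \ref{p:reach_and_contr} and \ref{p:reach_rank} yield $\tilde\sigma \in \cL(\cA^r)$ whose reachability matrices are never of full rank. Setting $w_t := \rev(\tilde\sigma|_{[0,t-1]}) \in \cW(\cA)$, the rank identity produces rank-deficient observability matrices $O_{w_t}(t)$ for every $t$. Because discarding rows cannot increase rank and $O_w(t')$ depends only on the prefix of $w$ of length $t'$, every such prefix also gives a rank-deficient observability matrix. A diagonal / K\"onig's-lemma extraction — analogous to, but simpler than, the one in Lemma \ref{lem:inf_to_fin} and not requiring invertibility of $A$ — then produces $\sigma \in \cL(\cA)$ with $O_\sigma(t)$ rank-deficient for every $t$, contradicting observability via Proposition \ref{p:obsv_mat_rank}.

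For the converse under $A$ regular, I argue symmetrically. An unobservable $\sigma \in \cL(\cA)$ yields finite admissible words $w_t := \rev(\sigma|_{[0,t-1]}) \in \cW(\cA^r)$ whose dual reachability matrices are rank-deficient via the same rank identity. The task is to upgrade this sequence of finite witnesses into a single infinite admissible signal of $\cA^r$ with the same deficiency; this is precisely the direction II $\Rightarrow$ I of Lemma \ref{lem:inf_to_fin} applied to the dual system, whose state matrix $A^\top$ is regular exactly when $A$ is. Propositions \ref{p:reach_rank} and \ref{p:reach_and_contr} then give uncontrollability of $(A^\top,C^\top,\cA^r)$.

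The main obstacle is this very asymmetry: a prefix of a rank-deficient reachability matrix need not itself be rank-deficient in general (the columns $A^{i-k}C_\sigma(k)$ appearing on the left in the identity underlying Lemma \ref{lem:inf_to_fin} can mask a drop of rank in $C_\sigma(k)$), so invertibility of $A$ is exactly what is needed to carry the witness down to all prefixes, and it is precisely this step that forces the hypothesis on $A$ in the converse. The forward implication, by contrast, is free from this difficulty because observability matrices depend monotonically on their defining word.
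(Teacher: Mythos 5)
Your proof is correct and follows essentially the same route as the paper's: the transpose identity $C_w(T)^\top = O_{\rev(w)}(T)$ combined with the word-reversal bijection between $\cW(\cA)$ and $\cW(\cA^{r})$, with Lemma \ref{lem:inf_to_fin} (applied to the dual, whose state matrix $A^\top$ is regular) carrying the converse direction exactly as in the paper. The only cosmetic difference is in the forward implication, where you argue by contrapositive and run a K\"onig-type extraction on the reversed prefixes, whereas the paper invokes the equivalence of observability and practical observability (Lemma \ref{cor-practical-obs}) to obtain a uniform horizon $T$ --- but that lemma is itself proved by the same extraction, and your diagnosis of exactly where invertibility of $A$ is (and is not) needed matches the paper's.
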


	\begin{proof}
		Let us suppose that the system $\syso$ is observable and thus practically observable, and select $T$ as the bound in the definition of practical observability, i.e.,  $O_{\sigma}(T)$ has full rank for every admissible signal $\sigma$ of length $T.$
		Suppose now by contradiction that the system $(A^\top,C^\top,\cA^{r})$ is not controllable. Hence, there is a finite signal $ w\in\mathcal{W}(\cA^{r})$ of length $|w|=T$ such that the reachability matrix
\[
			C_{ w} (T)= \pmat{C^\top w (T-1) & \ldots & (A^\top)^{(T-1)}C^\top w(0)  },
		\]
		does not have full rank.
		Note that
		\begin{equation}
		\label{eq:dual} C_w(T)^\top  = O_{\rev(w)}(T)
		\end{equation} and obviously $\rev(w) \in \cW(\cA)$.  However,  $O_{\rev(w)}(T)$ has full rank because of the practical observability of system  $\syso$ with bound $T$, which is a contradiction and thus $(A^\top,C^\top,\cA^{r})$ is controllable.	
			
		To prove the second statement assume $A$ regular and $(A^\top,C^\top,\cA^{r})$ is controllable. 	Suppose now by contradiction that the system $\syso$ is not observable. Then there exists a $\sigma$ such that for all $t\in\nat$ $O_\sigma(t)$ does not have full rank. By taking the successive prefixes of $\sigma$ given  $w_i=(\sigma(0),\ldots,\sigma(i-1)),$ we obtain an infinite sequence of words $w_i\in \cW (\cA)$ of length $|w_i|=i$, $i\in\nat$, such that the corresponding observability matrix $O_{w_i}(t)$ does not have full rank for all $t\in \nat_{\leq i}$.  Using \eqref{eq:dual} and reversing the automaton, we obtain $\rev( w_i)\in \cW (\cA^r)$ such that
		$C_{\rev(w_i)}(i)$ does not have full rank. By Lemma \ref{lem:inf_to_fin}, this implies the existence of a signal $\sigma$ such that $C_\sigma(t)$ is rank-deficient for all $t \in \nat$ and thus the system is not controllable. Contradiction! Hence, $\syso$ is observable.
	\end{proof}

From now on we call $(A^\top,C^\top,\cA^{r})$ the dual system of $(A,C,\cA)$.
	
	\begin{proposition}
	\label{p:reach_and_construc}
		The system  $(A,C,\cA)$ is constructible if and only if the dual system $(A^\top,C^\top,\cA^{r})$ is 0-controllable.
	\end{proposition}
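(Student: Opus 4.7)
The plan is to chain together the results already established and reduce the statement to an application of Proposition~\ref{p:obs_and_contr} in the invertible-$A$ case. Specifically, I would proceed along the chain
\[
(A,C,\cA)\ \text{constructible}\ \Longleftrightarrow\ (A_r,C_r,\cA)\ \text{observable}\ \Longleftrightarrow\ (A_r^\top,C_r^\top,\cA^r)\ \text{0-controllable}\ \Longleftrightarrow\ (A^\top,C^\top,\cA^r)\ \text{0-controllable},
\]
where $A_r$ is the regular block from the real Jordan decomposition $SAS^{-1}=\mathrm{diag}(A_r,A_s)$, $CS^{-1}=(C_r\ C_s)$, as in Lemma~\ref{lem:obsv_and_construc}.

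The first equivalence is exactly Lemma~\ref{lem:obsv_and_construc}. For the second equivalence, I would invoke Proposition~\ref{p:obs_and_contr}: since $A_r$ is regular, observability of $(A_r,C_r,\cA)$ is equivalent to controllability of $(A_r^\top,C_r^\top,\cA^r)$, and then Proposition~\ref{p:reach_and_contr} (using again that $A_r^\top$ is regular) converts controllability into reachability and into $0$-controllability.

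The third equivalence requires transporting the Jordan decomposition to the dual system. Taking $T=(S^{-1})^\top$ one gets
\[
T A^\top T^{-1} = (SAS^{-1})^\top = \begin{pmatrix}A_r^\top & 0\\ 0 & A_s^\top\end{pmatrix},\qquad T C^\top = (CS^{-1})^\top = \begin{pmatrix}C_r^\top \\ C_s^\top\end{pmatrix},
\]
so that the decomposition \eqref{eq-jordan-form-contr} of Lemma~\ref{lem:contr_and_0contr} applied to $(A^\top,C^\top,\cA^r)$ (viewing $C^\top$ in the role of $B$) has regular block $(A_r^\top,C_r^\top)$ and nilpotent block $(A_s^\top,C_s^\top)$. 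Hence by Lemma~\ref{lem:contr_and_0contr}, $(A^\top,C^\top,\cA^r)$ is $0$-controllable iff $(A_r^\top,C_r^\top,\cA^r)$ is $0$-controllable, closing the chain.

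The only step I would expect to require some care is the third one: one must check that $A_s^\top$ is still nilpotent (it is, since its spectrum equals that of $A_s$), and that similarity transformations commute properly with the duality map $(\cdot)\mapsto(\cdot)^\top$ so that the automaton $\cA^r$ attached to the dual is preserved through the basis change. Once these checks are in place, concatenating the three equivalences yields the claim.
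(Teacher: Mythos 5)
Your proposal is correct and follows essentially the same route as the paper: reduce constructibility to observability of the regular part via Lemma~\ref{lem:obsv_and_construc}, dualize with Proposition~\ref{p:obs_and_contr}, and use regularity of $A_r^\top$ to identify reachability with $0$-controllability before lifting back to the full dual system via Lemma~\ref{lem:contr_and_0contr}. You are in fact more explicit than the paper about transporting the Jordan decomposition through the transpose (the paper leaves that step implicit), but the argument is the same.
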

	
	\begin{proof}
	By Lemmas \ref{lem:obsv_and_construc} and \ref{p:obs_and_contr} 	we obtain $(A,C,\cA)$ is constructible $\Leftrightarrow$ $(A^r,C^r,\cA)$ observable  $\Leftrightarrow$ $((A^r)^\top ,(C^r)^\top,\cA^r)$ reachable/controllable (using that $A^r$ is regular).  Since $A^r$ is regular, reachability and 0-controllability are equivalent (combining Lemma \ref{lem:contr_and_0contr} and Proposition \ref{p:reach_and_0-contr}).
%
%
	\end{proof}
Before putting all the pieces together, we are left with the controllability problem in the case where $A$ is singular.  

\begin{example}
Take $A=0$ and $B=I$ with $I$ the identity matrix of appropriate dimensions. We take $\cA$ as the automaton that has $\lA = \{(1,0,1,0,1,0,1,\ldots), (0,1,0,1,0,1,0,1,\ldots)\}$ consisting of only periodic data loss signals with period $2$.  Note that the dual system $(A^\top,B^\top,\cA^r)$ is not observable and in fact $(0,1,0,1,0,1,0,1,\ldots)\in \cL(\cA^r) = \lA$ leads to $O_\sigma(t)=0$ not having full rank for all $t\in\nat$. However, the original system $(A,B,\cA)$ is controllable. Hence, in contrast with the LTI case without \akc{data} loss, in this case we do not have a complete duality result. Duality between controllability of a system and observability of the dual system only holds when $A$ is regular (or no \akc{data} loss is allowed).
\end{example}

This example shows that, surprisingly, the duality relation does not hold in the case where $A$ is not regular. Hence, for deciding controllability in this case we must proceed differently than in the regular case. By Lemma \ref{lem:contr_and_0contr} one can separate the system into the regular part and the nilpotent part. Let us now focus on a system where $A$ is nilpotent and thus $A^n=0$. Recall that in the observability case with $A$ nilpotent, the only case where observability can hold is the trivial case where no dropout is possible, see the end of the proof of Theorem \ref{t:mainres1}.

\begin{proposition}\label{prop-contr-nilpotent}
Suppose that the matrix $A$ is nilpotent.  The controllability of the system $\sysc$ can be checked algorithmically.
\end{proposition}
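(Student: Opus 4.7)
The plan is to exploit the nilpotency of $A$ to reduce the controllability question to a finite combinatorial problem solvable by a standard automata-theoretic argument. Since $A^n = 0$, we have $A^k B = 0$ for all $k \geq n$, so in the reachability matrix $C_\sigma(t)$ of Definition~\ref{d:reach_mat} only the first (at most) $n$ columns can contribute nontrivially for $t \geq n$. Consequently, for every $t \geq n$ the image of $C_\sigma(t)$ depends only on the window $w_\sigma(t) := (\sigma(t-n), \sigma(t-n+1), \ldots, \sigma(t-1))$ of the $n$ most recent symbols of $\sigma$. In particular, $\text{im}(C_\sigma(t)) = \text{im}(M(w_\sigma(t)))$, where, for $w = (w_1, \ldots, w_n) \in \{0,1\}^n$, the matrix $M(w)$ is defined by
\[
M(w) := \pmat{B w_n & AB w_{n-1} & \cdots & A^{n-1}B w_1}.
\]

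Next, I would define the set of \emph{good windows} $W := \{ w \in \{0,1\}^n : \text{rank}(M(w)) = n \}$. This set can be computed in finite time since there are only $2^n$ candidate windows and each rank check is elementary. Invoking Proposition~\ref{p:reach_and_contr}, controllability is equivalent to reachability, which by Proposition~\ref{p:reach_rank} is equivalent to the existence, for each admissible $\sigma \in \lA$, of some $t$ such that $C_\sigma(t)$ has full rank $n$. I would then argue that this is equivalent to the following purely combinatorial condition: for every $\sigma \in \lA$, there exists $t \geq n$ with $w_\sigma(t) \in W$. The nontrivial direction uses the observation that $\bigcup_{t \geq n} \text{im}(C_\sigma(t))$ is a union of at most $2^n$ distinct subspaces of $\ree^n$ (one per window), so it equals $\ree^n$ if and only if one of them already equals $\ree^n$, i.e., one of the windows is good.

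Finally, I would decide this condition via a product construction. Build an automaton $\tilde\cA$ whose states are pairs $(v, w)$ with $v \in \{1, \ldots, N\}$ a node of $\cA$ and $w \in \{0,1\}^{n-1}$ the record of the $n-1$ most recent output symbols; a transition $(v,w) \to (v', w')$ exists whenever $M_{v,v'} = 1$ and $w'$ is obtained from $w$ by shifting in $s_v$. By construction, admissible signals of $\cA$ correspond to infinite paths in $\tilde\cA$, and the length-$n$ window at each time can be read from the current state together with the next emitted symbol. Controllability is then equivalent to the nonexistence of an infinite admissible path in $\tilde\cA$ that never activates a good window, and this in turn amounts to detecting a reachable cycle in the subgraph of $\tilde\cA$ induced by removing all transitions that would produce a good window. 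Cycle-reachability in a finite directed graph is decidable in polynomial time, so the overall procedure terminates.

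The main conceptual step---and the only delicate one---is the reduction in the second paragraph, specifically the passage from "reachability at some time $t$" to "a good window is encountered at some time." This relies crucially on the fact that only finitely many distinct subspaces appear as images of the $C_\sigma(t)$, which is a direct consequence of nilpotency; without nilpotency the analogous reasoning would fail, which is exactly why controllability in the singular case cannot be handled by the duality-based approach used for the regular case. Once this reduction is established, the algorithmic part is routine finite-automata reasoning.
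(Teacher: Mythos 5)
Your overall strategy is sound and is considerably more explicit than the paper's own argument, which simply observes that since $A^{\ell}=0$ for the nilpotency index $\ell\leq n$, only the $\ell$ leftmost block-columns of $C_\sigma(t)$ (those involving the $\ell$ most recent symbols) can be nonzero, and then reduces the test to a finite check over the words of $\cW(\cA)$ of length at most $\ell$; no product automaton or cycle detection appears there. However, as written, the reduction in your second paragraph has a genuine gap. Reachability along a fixed $\sigma$ means $\bigcup_{t\geq 0}\imag C_\sigma(t)=\ree^n$, whereas your window criterion only inspects $\bigcup_{t\geq n}\imag C_\sigma(t)$. These need not coincide for an individual signal: take $A=0\in\ree^{n\times n}$ with $n\geq 2$, $B=I$ and $\sigma=(1,0,0,0,\ldots)$; then $C_\sigma(1)=B$ already has full rank, yet $\imag C_\sigma(t)=\{0\}$ for every $t\geq 2$, so no window with $t\geq n$ is good. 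Your justification (a finite union of proper subspaces cannot equal $\ree^n$) explains why the union over $t\geq n$ is full if and only if some single window is good, but it says nothing about the times $t<n$, so the asserted equivalence between reachability and ``every $\sigma$ meets a good window at some $t\geq n$'' is not actually proved.

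The equivalence you claim is nevertheless true, for a reason you do not invoke: Definition \ref{d:adm_swsig} distinguishes no initial node, so $\lA$ is closed under taking suffixes, and if some $\sigma\in\lA$ has only bad windows for $t\geq n$, then the shifted signal $\sigma(n),\sigma(n+1),\ldots$ also lies in $\lA$ and has \emph{all} of its reachability matrices rank-deficient (its matrix at a time $t<n$ has image contained in $\imag M\bigl((\sigma(t),\ldots,\sigma(n+t-1))\bigr)$, the bad window of $\sigma$ at time $n+t$), which witnesses unreachability. A cleaner repair is to zero-pad: for $1\leq t<n$ one has $\imag C_\sigma(t)=\imag M\bigl((0,\ldots,0,\sigma(0),\ldots,\sigma(t-1))\bigr)$, so initializing the register of $\tilde\cA$ with zeros makes the window characterization valid for all $t\geq 1$ and simultaneously fixes the unspecified initialization of your product automaton (which otherwise starts with a register that does not correspond to any emitted history). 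With either repair, your cycle-detection argument goes through and yields a correct, and in fact more detailed, decision procedure than the one sketched in the paper.
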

\begin{proof}
Let $\ell$ be the smallest $k \in \n_{\leq n} $ such that $A^k=0.$
Hence, by Proposition~\ref{p:reach_rank} we only have to check the rank of  all reachability matrices $C_\sigma (t)$ with $t \in \nat_{\leq \ell}$, which is a finite check over all finite words in $\cW(\cA)$ with maximal length $\ell$.
%
%
%
\end{proof}
	The main result of this section is given next.
	\begin{theorem}
	\label{t:mainres2}
		Problem \ref{p:mainprob2} is decidable in the sense that given a system specified by $\sysc$, there is an algorithm that decides (un)controllability, (un)reachability and (non-)$0$-controllability of the system in finite time.
	\end{theorem}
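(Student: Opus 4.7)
The plan is to decide each of the three properties by reduction to the observability problem of Section~\ref{s:obsv_res}. First, by Proposition~\ref{p:reach_and_contr}, controllability is equivalent to reachability, so it suffices to treat reachability and $0$-controllability.

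For $0$-controllability, I would apply the following reduction chain: Lemma~\ref{lem:contr_and_0contr} reduces $0$-controllability of $(A,B,\cA)$ to $0$-controllability of the regular part $(A_r,B_r,\cA)$; since $A_r$ is invertible, Proposition~\ref{p:reach_and_0-contr} then identifies this with reachability of $(A_r,B_r,\cA)$; finally, Proposition~\ref{p:obs_and_contr}, which applies because $A_r$ is regular, reduces this in turn to observability of the dual $(A_r^\top,B_r^\top,\cA^{r})$, decidable via Theorem~\ref{t:mainres1}. For reachability of $(A,B,\cA)$ with $A$ regular, the same duality directly yields an observability instance solved by Theorem~\ref{t:mainres1}.

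The remaining and more delicate case is reachability when $A$ is singular, since the example preceding Proposition~\ref{prop-contr-nilpotent} shows that duality breaks. Here I would adapt the two-routine algorithm underlying Theorem~\ref{t:mainres1} directly to the reachability matrices $C_\sigma(t)$: one routine enumerates admissible words of growing length and certifies reachability as soon as every such word yields a full-rank $C_\sigma(t)$; the other enumerates cycles of growing period in $\cA$ and certifies non-reachability as soon as a cycle's induced periodic signal keeps $C_\sigma(t)$ rank-deficient for all relevant $t$. Termination in the reachable case would follow from the equivalence of reachability with its practical version, established in direct analogy with Lemma~\ref{cor-practical-obs} by a standard compactness-of-words argument on $\lA$.

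The main obstacle is proving termination in the non-reachable singular case, i.e., producing a purely periodic witness of non-reachability of bounded period. My plan is to decompose $A$ by a real Jordan basis as $\pmat{A_r & 0 \\ 0 & A_s}$ with $A_s^\ell=0$, and combine two ingredients. For the regular block, Skolem's Theorem~\ref{t:skolem_thm} applied to the scalar sequences $v^\top A_r^k B_r e_j$, with $v$ ranging over a basis of the left kernel of the would-be reachability matrix and $e_j$ over the input directions, yields as in the observability proof an eventually periodic zero-pattern with a common period $P$. For the nilpotent block, the observation underlying Proposition~\ref{prop-contr-nilpotent}, namely that $A_s^k B_s=0$ for $k\geq \ell$, implies that only the last $\ell$ symbols of $\sigma$ affect the nilpotent rows of $C_\sigma(t)$. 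A pigeonhole argument over the finite set of triples (automaton node, time modulo $P$, tail of $\sigma$ of length $\ell$) should then produce a purely periodic admissible witness of non-reachability whose period is bounded by a computable quantity $PN\cdot 2^\ell$, ensuring termination of the cycle-enumeration routine and hence decidability.
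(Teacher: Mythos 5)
Your reductions for controllability (via Proposition~\ref{p:reach_and_contr}), for $0$-controllability (via Lemma~\ref{lem:contr_and_0contr}, Proposition~\ref{p:reach_and_0-contr} and duality), and for reachability with $A$ regular (via Proposition~\ref{p:obs_and_contr} and Theorem~\ref{t:mainres1}) are correct and coincide in substance with the paper's proof, which routes $0$-controllability through Proposition~\ref{p:reach_and_construc}. The gap lies in the singular reachability case. Your periodic-witness construction presupposes a fixed nonzero vector $v$ that annihilates from the left all columns of all the matrices $C_\sigma(t)$, to which Skolem's theorem could then be applied. Such a $v$ need not exist: unlike the kernels $\ker O_\sigma(t)$, which are nested and therefore have a nontrivial intersection ${\cal UO}_\sigma$ in the unobservable case, the images $\imag C_\sigma(t)$ are \emph{not} nested when $A$ is singular, because passing from $t$ to $t+1$ multiplies every existing column by $A$ (see \eqref{eq:reach_lower_t}). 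For instance, with $A=\pmat{0&1\\0&0}$, $B=\pmat{0\\1}$ and $\sigma=(1,0,1,0,\ldots)$ one finds $\imag C_\sigma(t)=\spann\{(0,1)^\top\}$ for $t$ odd and $\spann\{(1,0)^\top\}$ for $t$ even, so the system is unreachable along $\sigma$ and yet $\bigcap_{t}\bigl(\imag C_\sigma(t)\bigr)^{\perp}=\{0\}$. Hence ``a basis of the left kernel of the would-be reachability matrix'' is not well defined, and the Skolem-plus-pigeonhole machinery cannot be launched as you describe.

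The paper sidesteps this difficulty entirely: it observes that $(A,B,\cA)$ is reachable if and only if both the regular part $(A_r,B_r,\cA)$ and the nilpotent part $(A_s,B_s,\cA)$ are reachable, handles the regular part by duality (where, on the observability side, the kernels \emph{are} nested), and handles the nilpotent part by Proposition~\ref{prop-contr-nilpotent}: since $A_s^{\ell}=0$, only the reachability matrices with $t\le\ell$ matter, so this is a finite check over the words of $\cW(\cA)$ of length at most $\ell$ --- no Skolem argument and no periodic witness are needed for that block. If you wish to salvage your direct approach, you would have to replace the fixed $v$ by the normalized images $A_r^{-(t-1)}\imag C^r_\sigma(t)$ (which are nested for the regular block only) and additionally justify why rank deficiency of the \emph{stacked} matrix, rather than of each block separately, persists under periodic continuation; the paper's decomposition avoids both issues.
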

	
	\begin{proof} If $A$ is regular, one can check reachability by using Proposition \ref{p:obs_and_contr} above, by checking observability for the dual system.  If $A$ is singular, the system $(A,B,\cA)$ is reachable if and only if both the ``regular'' part $(A_r,B_r,\cA)$ and the ``singular'' part $(A_s,B_s,\cA)$ are reachable (this can be seen most easily by inspecting the structure in the reachability matrices based on the block diagonal structure in the real Jordan form).  The first system can be checked with Proposition \ref{p:obs_and_contr} again, and the second one with Proposition \ref{prop-contr-nilpotent}. Hence, reachability is decidable and thus controllability as well (recall that controllability is equivalent to reachability (Proposition \ref{p:reach_and_contr})).\\
	Finally, $0$-controllability (or the absence of it) can be checked by Proposition \ref{p:reach_and_construc}
	(note that the dual of the dual system is the original system) as constructibility is decidable due to Theorem~\ref{t:mainres1}.	\end{proof}
	
	\begin{corollary}
	\label{cor-practical-con}
				The system $\sysc$ is practically reachable (resp. practically controllable and practically $0$-controllable) if and only if it is reachable (resp. controllable and $0$-controllable).
	\end{corollary}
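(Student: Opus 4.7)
The ``only if'' direction for each of the three properties is immediate from the definitions. For the ``if'' direction I plan to mimic the compactness scheme used in Lemma~\ref{cor-practical-obs}, after first reformulating each notion as a ``per-time'' rank/inclusion condition that survives passage to the limit.

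The first step will be a reduction based on the standard linear-algebra fact that a finite union of proper subspaces of $\re^n$ is a proper subset of $\re^n$. For fixed $\sigma$, the set of states reachable from $0$ in at most $T$ steps is $\bigcup_{t\leq T}\imag C_\sigma(t)$, so practical reachability with bound $T$ is equivalent to: for every $\sigma\in\lA$, there exists $t\leq T$ such that $C_\sigma(t)$ has full rank. Similarly, practical $0$-controllability with bound $T$ is equivalent to: for every $\sigma\in\lA$, there exists $t\leq T$ with $\imag A^t\subseteq\imag C_\sigma(t)$. Finally, using $x(t,x_0,u,\sigma)=A^t x_0+C_\sigma(t) u^{t-1}$, practical controllability and practical reachability coincide (with the same uniform bound), so the controllability part will follow from the reachability part together with Proposition~\ref{p:reach_and_contr}.

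The main step is then a contraposition argument via compactness. Suppose the system is not practically reachable. Then for every $T\in\nat$ there exists $\sigma^T\in\lA$ such that $C_{\sigma^T}(t)$ is rank-deficient for every $t\leq T$. Since $\lA$ is closed (and hence compact) in $\{0,1\}^\nat$ under the product topology, the sequence $\{\sigma^T\}$ admits a subsequential limit $\sigma\in\lA$. For each fixed $t$, eventually along the subsequence $\sigma^T$ agrees with $\sigma$ on the first $t$ coordinates, so $C_\sigma(t)=C_{\sigma^T}(t)$ is rank-deficient. Hence $C_\sigma(t)$ is rank-deficient for every $t\in\nat$, contradicting reachability by Proposition~\ref{p:reach_rank}. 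An essentially identical argument, with ``rank-deficient'' replaced by ``$\imag A^t\not\subseteq\imag C_\sigma(t)$'', handles $0$-controllability.

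The delicate point, relative to the observability case, is precisely this reformulation. The null space of $O_\sigma(t)$ is monotonically nonincreasing in $t$, so ``$O_{\sigma^T}(T)$ rank-deficient'' automatically gives rank-deficiency at every $t\leq T$; but the column image of $C_\sigma(t)$ is not monotone in $t$ when $A$ is singular, since $C_\sigma(t+1)=(B\sigma(t),\,A\,C_\sigma(t))$ can lose rank through the nilpotent part of $A$. The finite-union-of-subspaces observation is what bypasses this obstacle by forcing rank-deficiency at every intermediate time $t\leq T$, which is exactly what the limit argument needs to inherit.
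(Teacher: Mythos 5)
Your proof is correct, but it takes a genuinely different route from the paper's for the core reachability equivalence. The paper splits $A$ via the real Jordan form into a regular part $(A_r,B_r,\cA)$ and a nilpotent part $(A_s,B_s,\cA)$, disposes of the nilpotent part trivially (reachability there must occur within $n$ steps), and invokes Lemma~\ref{lem:inf_to_fin} for the regular part --- that lemma is exactly the finite-words-to-infinite-signal compactness step, but it needs $A$ regular because it only records rank-deficiency of $C_{w_i}(i)$ at the \emph{final} time $i$ and must propagate it backwards through \eqref{eq:reach_lower_t2}, which uses invertibility of $A$. You sidestep the decomposition entirely: your observation that the negation of practical reachability, read through the finite-union-of-proper-subspaces fact, already yields rank-deficiency of $C_{\sigma^T}(t)$ at \emph{every} $t\leq T$ is precisely what makes the limit argument inherit rank-deficiency at each fixed $t$ without any regularity assumption. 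You correctly diagnose the non-monotonicity of $\imag C_\sigma(t)$ as the obstruction that forces the paper into the regular/singular split, and your reformulation neutralizes it. The paper's route buys economy (it reuses Lemma~\ref{lem:inf_to_fin} and the Jordan machinery already set up for Lemma~\ref{lem:contr_and_0contr}); yours buys a self-contained, uniform treatment of the singular case. Your handling of controllability (same bound as reachability, via Proposition~\ref{p:reach_and_contr}) matches the paper; for $0$-controllability the paper instead reduces to the regular part via Lemma~\ref{lem:contr_and_0contr} and quotes the reachability result, whereas you run the compactness argument again on the condition $\imag A^t\subseteq\imag C_\sigma(t)$ --- this works, though you should note explicitly that concluding ``not $0$-controllable'' from the limit signal requires the countable-union-of-proper-subspaces argument (as in Proposition~\ref{p:reach_rank}) applied to the preimages $(A^t)^{-1}\imag C_\sigma(t)$, since a single bad initial state must be exhibited.
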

		\begin{proof}
	In the spirit of Proposition~\ref{p:reach_rank} it is obvious that practical reachability is equivalent to the existence of a finite  $T\in\nat$ such that for all $\sigma \in \lA$ it holds that for some $t\in \nat_{\leq T}$ $C_\sigma(t)$ has full rank. From this it follows that $\sysc$ is practically reachable if and only if  $(A_r,B_r,\cA)$ \mh{is practically reachable by Lemma~\ref{lem:inf_to_fin}.}
	 Finally, due to nilpotence of $A_s$ it is clear that $(A_s,B_s,\cA)$ can only be reachable if and only if it is practically reachable. This shows that reachability and practical reachability are equivalent as reachability of a system is equivalent to reachability of its ``regular'' part $(A_r,B_r,\cA)$ and its ``singular'' part $(A_s,B_s,\cA)$.
	
\rmjjj{Now, by the same arguments as in Proposition~\ref{p:reach_and_contr}, practical controllability and practical reachability are equivalent (and we already knew that controllability and reachability are equivalent).
	
	Finally, d}ue to nilpotence of $A_s$ and the results in  Lemma~\ref{lem:contr_and_0contr} it follows that $\sysc$ is practically 0-controllable $\Leftrightarrow$ ${(A_r,B_r,\cA)}$ is practically $0$-controllable $\Leftrightarrow$ ${(A_r,B_r,\cA)}$ is $0$-controllable $\Leftrightarrow$  $\syso$ is  0-controllable. 	
	\end{proof}

	{The relationships among various properties presented so far are summarized in Figure \ref{fig:connxn}.}
\begin{figure*}[htbp]
    \begin{center}
       \includegraphics[height=6cm]{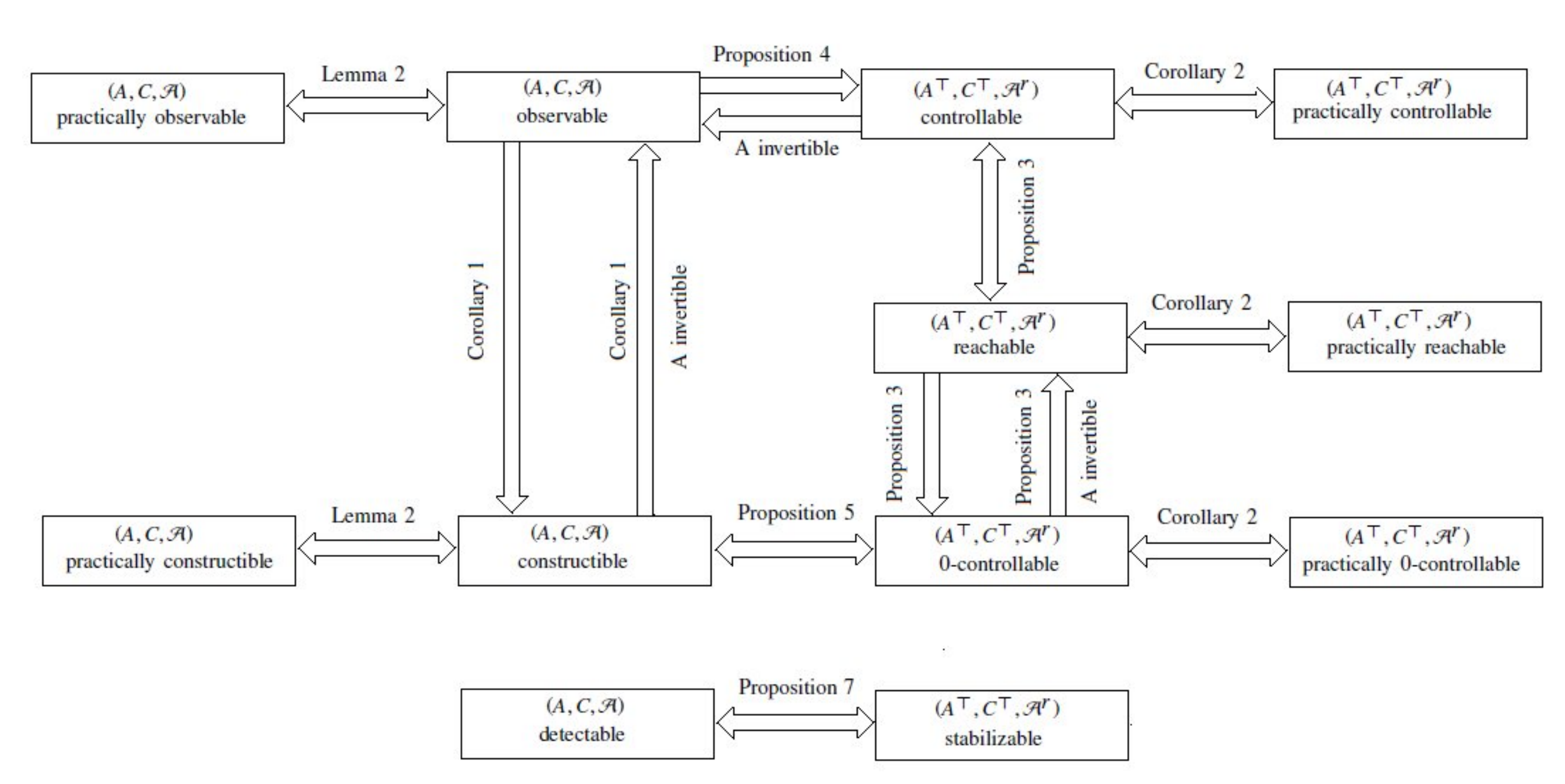}
    \end{center}	
    \caption{Relations among various properties.}  \label{fig:connxn}
    \end{figure*}	

\section{Stabilizability and detectability results}
\label{s:stab_detec_res}
	The next problem that we consider in this paper is given as follows.
	\begin{prob}
	\label{p:mainprob3}
		(a) Given a system $\syso$, determine whether the system is (un)detectable.\\
		(b) Given a system $\sysc$, determine whether the system is (un)stabilizable.
	\end{prob}
	
	\begin{lemma}
	\label{lem:det_condn}
	Consider matrices $A$, $B$, $C$ as in the systems $\syso$ and $ \sysc$. Let $W$ be a similarity transformation matrix, so that in block representation 
		\[
			WAW^{-1} = \pmat{A_{11} & 0\\0 & A_{22}},\ WB = \pmat{B_1 \\ B_2 },\ CW^{-1} = \pmat{C_{1} & C_2},
		\]
		where $A_{11}$ has all the eigenvalues with modulus larger or equal to one, and $A_{22}$ has all the eigenvalues with modulus smaller than one. Then the system $\syso$ is detectable if and only if
	$(A_{11},C_{1},\cA)$ is observable. The system $ \sysc$ is stabilizable if and only if
	$(A_{11},B_{1},\cA)$ is reachable.
	\end{lemma}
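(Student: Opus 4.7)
The plan is to use the invariance of detectability and stabilizability under similarity transformations and to work directly with the block-decomposed system, so that the state splits as $x=(x_1,x_2)$. The key observation is that since $A_{22}$ is Schur, the component $x_2$ contracts to $0$ whenever the control eventually vanishes (in particular in the autonomous observation case), so the analysis reduces to the $x_1$-component governed by $A_{11}$. The matrix $A_{11}$ is invertible since all its eigenvalues have modulus $\geq 1$ (hence are nonzero), a fact that will be used repeatedly.

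For detectability, necessity is direct: if $(A_{11},C_{1},\cA)$ is not observable, pick $\sigma\in\lA$ and $\xi\neq 0$ with $\sigma(t)C_1 A_{11}^t\xi = 0$ for all $t\in\nat$, and consider the initial state $W^{-1}(\xi,0)$; then $y(t)\equiv 0$ while $x_1(t)=A_{11}^t\xi$ does not tend to $0$, since $A_{11}$ is invertible with spectrum outside the open unit disk, contradicting detectability. For sufficiency, take $\sigma\in\lA$ and an initial state with $y\equiv 0$; then $\sigma(t)C_1 A_{11}^t x_1(0) = -\sigma(t)C_2 A_{22}^t x_2(0)$, whose right-hand side has norm bounded by $M\rho^t$ for some $\rho\in(0,1)$. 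By Lemma~\ref{cor-practical-obs}, observability of $(A_{11},C_1,\cA)$ is equivalent to practical observability, so there is a uniform horizon $T^{\ast}$ such that the observability matrix of $(A_{11},C_1)$ for every admissible signal of length $T^{\ast}$ has full column rank; since this matrix depends only on the length-$T^{\ast}$ prefix of $\sigma'\in\lA$ and only finitely many such prefixes exist, the corresponding left inverses are uniformly bounded. Applied to each shifted admissible signal $\sigma(T'+\cdot)\in\lA$, this yields $\|x_1(T')\|\leq M'\rho^{T'}$, hence $\|A_{11}^{T'}x_1(0)\|$ decays exponentially. Because $A_{11}$ has spectrum outside the open unit disk, this forces $x_1(0)=0$; then $x_1(t)\equiv 0$ and $x_2(t)\to 0$ by Schurness of $A_{22}$.

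For stabilizability, sufficiency is straightforward: reachability of $(A_{11},B_{1},\cA)$ is equivalent to its controllability by Proposition~\ref{p:reach_and_contr}, so for any $\sigma\in\lA$ and any initial state one can find $u$ on some finite window $[0,T]$ driving $x_1(T)=0$; setting $u(t)=0$ for $t\geq T$ keeps $x_1\equiv 0$ while the Schur matrix $A_{22}$ drives $x_2$ to $0$. The hard direction is necessity: suppose $(A_{11},B_1,\cA)$ is not reachable. Since $A_{11}$ is invertible, Proposition~\ref{p:obs_and_contr} implies that the dual $(A_{11}^{\top},B_1^{\top},\cA^{r})$ is not observable, and the cyclic-signal argument of Claim~1 in the proof of Theorem~\ref{t:mainres1} yields a purely periodic $\tau\in\cL(\cA^{r})$ of some period $p$ and a nonzero $\xi$ with $\tau(t)\xi^{\top}A_{11}^{t}B_1=0$ for all $t$. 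Reversing this cycle produces a cyclic admissible signal $\sigma'\in\lA$ for which the reachable subspace $V^{\ast}:=\sum_{t\geq 0}\col(C_{\sigma'}^{(1)}(t))$ stabilizes (by the Cayley--Hamilton theorem applied to $A_{11}^{p}$) as a proper $A_{11}^{p}$-invariant subspace contained in $\xi^{\perp}$. Choosing $x_1(0)\notin V^{\ast}$, one obtains $x_1(Np)\equiv A_{11}^{Np}x_1(0)\pmod{V^{\ast}}$ for all $N\in\nat$, and because the quotient operator induced by $A_{11}^{p}$ on $\re^{n_1}/V^{\ast}$ still has all eigenvalues of modulus $\geq 1$, the quotient norm of $x_1(Np)$ is bounded away from $0$, so $x_1(t)\not\to 0$ for any $u$, contradicting stabilizability. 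The principal technical obstacle is the careful translation of the cycle in $\cA^{r}$ back to a cycle in $\cA$ together with the verification that the resulting reachable subspace is indeed a proper $A_{11}^{p}$-invariant subspace.
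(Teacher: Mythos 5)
The paper itself offers no proof of this lemma (it is explicitly skipped as ``an easy generalization of the LTI case''), so your attempt has to stand on its own. The detectability half does: the necessity argument via the initial state $W^{-1}(\xi,0)$ is fine, and the sufficiency argument --- using Lemma~\ref{cor-practical-obs} to get a uniform horizon $T^{*}$, the finiteness of admissible prefixes of length $T^{*}$ to get a uniformly bounded family of left inverses, and the shift-invariance of $\lA$ to conclude $\|A_{11}^{T'}x_1(0)\|\le M'\rho^{T'}$ and hence $x_1(0)=0$ --- is correct and is exactly the right way to handle eigenvalues on the unit circle. The sufficiency direction for stabilizability (steer $x_1$ to zero by controllability, then switch the input off and let the Schur block decay) is also correct.

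The necessity direction for stabilizability, however, has a genuine gap, and it sits precisely where you flag ``the principal technical obstacle.'' The subspace $V^{*}:=\sum_{t\ge 0}\col\bigl(C_{\sigma'}^{(1)}(t)\bigr)$ is \emph{not} proper and \emph{not} contained in $\xi^{\perp}$ in general: the block columns of $C_{\sigma'}(t)$ are $A_{11}^{\,t-1-k}B_1\sigma'(k)$, so as $t$ ranges over all of $\nat$ the exponent $t-1-k$ sweeps through every natural number for each fixed $k$ with $\sigma'(k)=1$; consequently $V^{*}$ contains the full Kalman reachability subspace $\sum_{j\ge 0}A_{11}^{j}\,\imag B_1$, which equals $\re^{n_1}$ whenever $(A_{11},B_1)$ is classically controllable. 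The Skolem-type relation $\tau(s)\,\xi^{\top}A_{11}^{s}B_1=0$ only annihilates a column $A_{11}^{\,t-1-k}B_1\sigma'(k)$ when the alignment $\sigma'(k)=\tau(t-1-k)$ holds, i.e.\ only for $t$ in a single residue class modulo the period $p$; summing over \emph{all} $t$ destroys this. The repair is to restrict to that residue class: the spaces $\imag C_{\sigma'}(Np)$ are nondecreasing in $N$ (by periodicity each column of $C_{\sigma'}(Np)$ reappears in $C_{\sigma'}((N+1)p)$ with the same exponent), all contained in $\xi^{\perp}$, and their eventual value $V^{*}$ satisfies $A_{11}^{p}V^{*}=V^{*}$ since $\imag C_{\sigma'}((N+1)p)\supseteq A_{11}^{p}\imag C_{\sigma'}(Np)$ with equal dimensions once they stabilize. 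With that corrected $V^{*}$ your quotient argument along the subsequence $t=Np$ does go through ($\overline{x_1(Np)}=\bar A^{N}\overline{x_1(0)}$ with $\bar A$ having all eigenvalues of modulus $\ge 1$). Be warned that some shortcut is really needed here: for a matrix with eigenvalues on the unit circle the naive estimate $\mathrm{dist}(A_{11}^{t}x_1(0),\imag C_{\sigma}(t))\ge c/(1+t)^{n-1}$ can genuinely tend to zero (e.g.\ $A_{11}=\pmat{1&1\\0&1}$ with a non-invariant line), so the $A_{11}^{p}$-invariance of the limiting subspace is not a cosmetic detail but the crux of the argument.
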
 The lemma is an easy generalization of the LTI case without \akc{data} losses, just like in Lemmas \ref{lem:obsv_and_construc} and \ref{lem:contr_and_0contr}, and we skip the proof.

	An immediate consequence (as observability and reachability were proven to be decidable properties) is the following theorem.
	
	\begin{theorem}
	\label{t:mainres3}
		Problem \ref{p:mainprob3}(a) (resp. \ref{p:mainprob3}(b)) are decidable in the sense that given a system specified by $\syso$ (resp. $\sysc$), there is an algorithm that decides (un)detectability (resp. stabilizability) of the system in finite time.
	\end{theorem}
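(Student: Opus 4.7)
The plan is to reduce both parts of Problem~\ref{p:mainprob3} to properties we already know how to decide, using Lemma~\ref{lem:det_condn} as the bridge. Concretely, detectability of $\syso$ is equivalent to observability of $(A_{11},C_1,\cA)$, and stabilizability of $\sysc$ is equivalent to reachability of $(A_{11},B_1,\cA)$, where the block $A_{11}$ collects the eigenvalues of modulus $\geq 1$. Since Theorem~\ref{t:mainres1} gives an algorithm for observability and Theorem~\ref{t:mainres2} gives an algorithm for reachability, once the relevant sub-system has been constructed, decidability is immediate.

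The substantive step is therefore to produce the similarity transformation $W$ of Lemma~\ref{lem:det_condn} algorithmically. First I would compute the characteristic polynomial of $A$ and factor it (over the rationals or their algebraic closure) into two coprime factors $p_u(\lambda) p_s(\lambda)$, where the roots of $p_u$ have modulus $\geq 1$ and those of $p_s$ have modulus $<1$. Note that because $p_u$ and $p_s$ are coprime, this splitting itself is insensitive to the detailed Jordan structure: it can be carried out exactly on rational data by standard algebraic root-isolation procedures, which can in finite time decide, for each root, whether $|\lambda| \geq 1$ or $|\lambda| < 1$. Then I would compute the corresponding $A$-invariant subspaces as the kernels $\ker p_u(A)$ and $\ker p_s(A)$, take bases for each, assemble them into an invertible matrix $W^{-1}$, and extract the blocks $A_{11}, A_{22}, B_1, B_2, C_1, C_2$ by reading off the appropriate partitioning of $WAW^{-1}$, $WB$, $CW^{-1}$.

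With $W$ in hand, I would invoke Lemma~\ref{lem:det_condn} to replace the detectability (resp.\ stabilizability) question for the full system by the observability (resp.\ reachability) question for the reduced triple $(A_{11},C_1,\cA)$ (resp.\ $(A_{11},B_1,\cA)$), and then feed that reduced triple into the algorithm of Theorem~\ref{t:mainres1} (resp.\ Theorem~\ref{t:mainres2}). Both algorithms terminate in finite time, so the composite procedure does as well.

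The only genuine obstacle is ensuring the stable/antistable decomposition is \emph{effective}: we need to decide exact inequalities of the form $|\lambda| \geq 1$ versus $|\lambda| < 1$ for the roots of a rational polynomial. This is standard (for instance via Sturm sequences applied to the polynomial obtained by the substitution that maps the unit circle to the real line, or equivalently by counting roots inside the unit disk via a Schur--Cohn-type test), so it poses no theoretical difficulty. Once this is observed, the theorem reduces to two prior decidability results and is essentially a one-line consequence of Lemma~\ref{lem:det_condn} combined with Theorems~\ref{t:mainres1} and \ref{t:mainres2}.
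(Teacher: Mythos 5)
Your proposal is correct and follows essentially the same route as the paper, which likewise treats the theorem as an immediate consequence of Lemma~\ref{lem:det_condn} together with the decidability of observability (Theorem~\ref{t:mainres1}) and reachability (Theorem~\ref{t:mainres2}). Your added discussion of how to compute the stable/antistable splitting effectively is a reasonable extra precaution, but it addresses a point the paper simply takes for granted rather than a genuine divergence in approach.
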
	
	
	We also have the following duality result.
		\begin{proposition}
	\label{p:det_stab_dual}
		The system $\syso$ is detectable if and only if the dual system $(A^\top, C^\top, \cA^{r})$ is stabilizable, where $\cA^{r}$ is the reverse automaton of $\cA$.
	\end{proposition}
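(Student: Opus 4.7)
The plan is to chain together Lemma \ref{lem:det_condn} (applied twice) with Proposition \ref{p:obs_and_contr}, using the fact that the ``unstable part'' $A_{11}$ of $A$ is automatically regular (since all its eigenvalues have modulus $\geq 1$), which is exactly the condition needed for the full duality between observability and controllability to hold.

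First, I would apply Lemma \ref{lem:det_condn} to the system $\syso$: there is a similarity transformation $W$ with $WAW^{-1}=\mathrm{diag}(A_{11},A_{22})$ and $CW^{-1}=(C_1\ C_2)$, where $A_{11}$ collects all eigenvalues of modulus at least one, and $\syso$ is detectable if and only if $(A_{11},C_1,\cA)$ is observable. Next, I would apply the same lemma to the dual system $(A^\top,C^\top,\cA^r)$, using the transformation $W':=W^{-\top}$. A direct computation gives $W'A^\top(W')^{-1}=(WAW^{-1})^\top=\mathrm{diag}(A_{11}^\top,A_{22}^\top)$ and $W'C^\top=(CW^{-1})^\top=\bigl(\begin{smallmatrix}C_1^\top\\ C_2^\top\end{smallmatrix}\bigr)$; since the spectra of $A_{ii}$ and $A_{ii}^\top$ coincide, the same block decomposition applies. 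Hence $(A^\top,C^\top,\cA^r)$ is stabilizable if and only if $(A_{11}^\top,C_1^\top,\cA^r)$ is reachable.

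The remaining link is to argue that $(A_{11},C_1,\cA)$ is observable iff $(A_{11}^\top,C_1^\top,\cA^r)$ is reachable. Since all eigenvalues of $A_{11}$ have modulus $\geq 1$, in particular $A_{11}$ has no zero eigenvalue, hence is invertible. Therefore Proposition \ref{p:obs_and_contr} applies in both directions, yielding the equivalence between observability of $(A_{11},C_1,\cA)$ and controllability of $(A_{11}^\top,C_1^\top,\cA^r)$; by Proposition \ref{p:reach_and_contr}, controllability coincides with reachability. Concatenating the three equivalences gives $\syso$ detectable $\Leftrightarrow$ $(A^\top,C^\top,\cA^r)$ stabilizable.

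The only nontrivial point in this plan is verifying that the similarity transformation $W^{-\top}$ used for the dual system really produces the block decomposition required by Lemma \ref{lem:det_condn} with the ``unstable part'' being exactly $A_{11}^\top$ paired with $C_1^\top$; this is a short matrix-algebra check. Everything else is a direct citation of already established results, with regularity of $A_{11}$ being the key structural fact that makes the duality work even though Proposition \ref{p:obs_and_contr} fails in general for singular $A$ (as illustrated by the example preceding Proposition \ref{prop-contr-nilpotent}).
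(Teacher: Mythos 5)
Your proposal is correct and follows essentially the same route as the paper: apply Lemma \ref{lem:det_condn} to both $\syso$ (with $W$) and the dual system (with $W^{-\top}$), then use the regularity of $A_{11}$ to invoke the two-sided duality of Proposition \ref{p:obs_and_contr} between observability of $(A_{11},C_1,\cA)$ and reachability of $(A_{11}^\top,C_1^\top,\cA^r)$. The matrix-algebra check you flag ($W^{-\top}A^\top W^{\top}=\mathrm{diag}(A_{11}^\top,A_{22}^\top)$ with unchanged spectra) is exactly the step the paper handles by citing the transformation matrix $W^{-T}$.
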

	\begin{proof}
	\rmjjj{First, $\syso$ is detectable if and only if} $(A_{11},C_{1},\cA)$ is observable with the matrices $A_{11}$ and $C_1$ as in Lemma~\ref{lem:det_condn} (and transformation matrix $W$). Obviously, $A_{11}$ is  a regular matrix and thus observability of $(A_{11},C_{1},\cA)$ is equivalent to reachability of its dual system $(A_{11}^\top,C_{1}^\top,\cA^r)$. As the latter is equivalent to $(A^\top, C^\top, \cA^{r})$ being stabilizable based on Lemma~\ref{lem:det_condn} (with transformation matrix $W^{-T}$), the proof is complete.
	\end{proof}
	
%
	
	%

\section{Connections with systems subject to varying delays}
\label{s:swdelay_connxn}
In this section we make two links between our problem and a similar one, where the non-idealities in the feedback loop \rmjjj{incur time-varying delays, rather than pure packet losses.  We mainly show that this other model is a particular case of ours, and provide an algorithm to encapsulate any given controllability problem with time varying delays as a controllability problem with packet losses as introduced in this paper (Subsection \ref{subsec:delays2dropouts}).  Then, in Subsection \ref{subsec:dropouts2delays}, we study a particular class of systems from the model studied in this paper, namely, the systems with a bound on the maximal number of consecutive dropouts, and we show how to model these particular systems as systems with varying delays.  The interest of this last result is that alternative algorithms, previously developed for systems with time-varying delays, can then be used for this particular family of systems with packet dropouts.}

\subsection{Controllability under switching delays}

We first briefly recall the framework of  \cite{JUNGERS_12,jungers2014further}, where the problem is to control a linear plant, when the feedback loop is subject to  time-varying delays.  In \cite{JUNGERS_12,jungers2014further} the constraint on the combinatorial structure of the delays is not given in terms of an automaton, but simply in terms of a set of natural numbers, that represent delays, which can possibly be incurred on the feedback signal. 

\begin{definition}
  A \emph{system with varying delays} is defined by a triple $(A,B,\cD)$ with $A\in \ree^{n\times n}$, $B \in \ree^{n\times m}$ and $\cD=\{d_1,d_2,\dots,d_N\}\subset \n$ is the set of \emph{admissible delays}. Its dynamics is described by the equations
\begin{equation}\label{system-varying-delays}
x(t+1)=
Ax(t) + \sum_{t'\leq t, t'+d(t')=t}B u(t')
\end{equation}
with  $$d: \n \rightarrow \cD$$ being any arbitrary \emph{delay signal}.
\end{definition}
The meaning of Equation \eqref{system-varying-delays} is that at each time $t'=1,2,\dots,$ the control packet is subject to a certain delay $d(t'),$ which is determined by an exogenous switching signal $d.$  The control packet $u(t')$ will impact the state of the system not at time $t'+1$ as in a classical LTI system, but at time $t'+d(t')+1.$  If several packets arrive at the actuator at the same time, it was arbitrated in \cite{JUNGERS_12,jungers2014further} that they are simply added.\\
The trajectory generated by the system \eqref{system-varying-delays} with initial state $x(0)=x_0$, input sequence $u:\nat \rightarrow \ree^m$ and delay  signal  \rmjjj{$d: \nat \rightarrow \cD$} is denoted by $x_{x_0,d,u}$.

In the  spirit of Definition \ref{d:all-defn2} {the following definition was introduced in \cite{jungers2014further}}.

\begin{definition} \label{defi-contr-delays} We say that a system with varying delays described by the triple  $(A,B,\cD)$ is \emph{controllable} if for any delay signal $d:\ \n \rightarrow \cD,$  any initial state $x_0\in \ree^n$ and any final state $x_f\in\ree^n$, there is an input signal $u:\nat\rightarrow \ree^m $ such that $x_{x_0,d,u}(T)=x_f$ for some $T\in \n$.  In case the system is not controllable it is  said to be \emph{uncontrollable.}
\end{definition}
In this setting too, given a delay signal $d:\nat\rightarrow \cD$ one can introduce the controllability matrix $\bar C_d(t)$ at time $t\in\nat$.

\begin{definition}\label{def-contr-matrix-delays} Consider a system with time-varying delays \eqref{system-varying-delays}.
Given $d:\nat\rightarrow \nat$ we define the \emph{controllability matrix} at time $t\in\nat$ as  $\bar C_d(t)\in \ree^{n\times mt}$, whose $i$-th block-column (for all $i\in \nat_{[1,t]}$) is given by
\begin{itemize} \item $A^{(t-i-d(i-1))}B$,  if $i+d(i-1)\leq t;$
\item the zero column, if $i+d(i-1)>t.$
\end{itemize}
\end{definition}

The state of a system with varying delays as in \eqref{system-varying-delays} can be expressed thanks to its controllability matrix as
\begin{equation} x_{x_0,\sigma,u}(t) = A^t x_0 + \bar C_d(t) u^{t-1}, \label{eq:state_evol_delays} \end{equation}
where $u^{t-1}\in \ree^{mt}$ is the column vector with all the inputs $u(s),$ $s=0,\dots, t-1.$  Again, controllability can be related to the rank of the matrix $\bar C_d(t)$ as formalized next.
\begin{proposition}\label{prop-contr-delays} {[adapted from Proposition 4 in \cite{jungers2014further}] }
The delay system given by $(A,B,\cD)$  is uncontrollable if and only if there is a delay signal $d:\nat \rightarrow \cD$ such that for all $ t\in\nat$ it holds that $\bar C_d(t)$ does not have full rank.
\end{proposition}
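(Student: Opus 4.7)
The plan is to mirror the proof of Proposition \ref{p:reach_rank}, adapted to the delay setting, by exploiting the explicit formula \eqref{eq:state_evol_delays} for the state. Indeed, controllability as given in Definition \ref{defi-contr-delays} reduces, for any fixed $d$, $x_0$ and $x_f$, to the solvability in $u^{t-1}\in\ree^{mt}$ of the linear system
\begin{equation*}
\bar C_d(t)\, u^{t-1} \;=\; x_f - A^t x_0
\end{equation*}
for some $t\in\nat$. So the question becomes purely one about the image of the matrix $\bar C_d(t)$.

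For the ``if'' direction (controllability implies eventual full rank for every $d$), I would argue by contrapositive: suppose there exists a delay signal $d:\nat\to\cD$ with $\mathrm{rank}\,\bar C_d(t)<n$ for every $t\in\nat$. Taking $x_0=0$ in the definition, the set of states that can be reached at some time under the signal $d$ is $\bigcup_{t\in\nat}\mathrm{Im}\,\bar C_d(t)$, which is a countable union of proper linear subspaces of $\ree^n$ and therefore cannot equal $\ree^n$ (by a standard Baire/measure argument, since proper subspaces have empty interior and Lebesgue measure zero). Thus there exists an $x_f$ that cannot be reached from $0$, contradicting controllability.

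For the ``only if'' direction, assume that for every $d:\nat\to\cD$ there exists $t(d)\in\nat$ such that $\bar C_d(t(d))$ has full row rank $n$. Then for arbitrary $x_0, x_f\in\ree^n$, the right-hand side $x_f - A^{t(d)} x_0$ lies automatically in $\mathrm{Im}\,\bar C_d(t(d)) = \ree^n$, so a solution $u^{t(d)-1}$ exists. Setting $u(0),\dots,u(t(d)-1)$ according to this solution (and arbitrary afterwards) yields $x_{x_0,d,u}(t(d)) = x_f$, establishing controllability.

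I do not anticipate a genuine obstacle here: the only point that requires a moment of care is the bookkeeping in Definition \ref{def-contr-matrix-delays}, namely that the $i$-th block column of $\bar C_d(t)$ is zero whenever the packet sent at time $i-1$ has not yet arrived by time $t$. This is already built into the matrix, so the identity \eqref{eq:state_evol_delays} holds verbatim and the image-of-the-controllability-matrix argument goes through unchanged. The rest is a direct translation of the proof of Proposition \ref{p:reach_rank}.
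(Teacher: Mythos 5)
Your proof is correct and is essentially the argument the paper intends: the paper itself defers this statement to the cited reference, but its proof of the exact analogue for dropout systems (Proposition \ref{p:reach_rank}) uses precisely your reasoning — the state formula reduces the question to the image of the controllability matrix, necessity follows because a countable union of proper subspaces cannot cover $\ree^n$, and sufficiency follows by solving the resulting linear system. Your remark about the zero block-columns handling not-yet-arrived packets is the right bookkeeping point and is indeed already absorbed into \eqref{eq:state_evol_delays}.
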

\rmjjj{In order to best represent the link between the varying delays model and our packet dropouts model, we introduce another useful concept: Given a delay signal $d:\nat\rightarrow \nat$, we define the \emph{actuation signal $\tau_d: \; \n\rightarrow \{0,1\}$} for $t\in\nat$ by
\begin{equation} \label{eq-actuation-signal}\tau_d(t)=\begin{cases} 1, & \text{ if there is a } t' \in \nat \text{ such that } t'+d(t')=t, \\
 0, & \text{ otherwise.} \end{cases}\end{equation}
With this definition in place, the link between the two models is easy to express: it turns out that for all $d:\nat \rightarrow \nat$ and all $t\in \nat,$ it holds that \begin{equation}\label{eq-link}\imag C_{\tau_d}(t) = \imag \bar C_d(t),\end{equation} where $ C_{\tau_d}(t)$ is the Reachability matrix as defined in \ref{d:reach_mat}.}

Proposition \ref{prop-contr-delays} gives us a characterization of controllable systems, but again not an algorithmic one. Indeed it is not clear how to generate the signal $\sigma$
 as in the proposition, and even less clear how to prove the \rmj{in}existence of such an infinite-length signal.  However, it is shown in \cite{jungers2014further} that this problem is decidable and even more, that one can bound the time needed by the algorithm to answer the question.

\subsection{The switching delays problem expressed as a dropout problem}  \label{subsec:delays2dropouts}

In this subsection, we prove that the setting studied in this paper (where an automaton describes the possible packet loss signals) is \emph{more general than the model with switching delays}.  Indeed, we take an arbitrary controllability problem with switching delays, and we convert it to a controllability problem with a dropout signal constrained by an automaton, as studied in this paper.  Thus, the controllability test introduced in this paper can be applied to the previously studied problem of controllability with {varying delays.} However, the next example demonstrates that the proposed results in this paper apply to a much richer class of automata than the ones produced by varying delays.


\begin{example}
Consider the automaton 
described by \(M = \pmat{0 & 1 & 0\\0 & 0 & 1\\1 & 0 & 0}\) and \(s = \pmat{1\\1\\0}\), which is depicted in Figure \ref{fig:delay_ex}.
\begin{figure}[h!]
\begin{center}
        \begin{tikzpicture}[every path/.style={>=latex},every node/.style={auto}]
            \node[state]            (b) at (1,-1)  { $s_3=0$ };
            \node[state]            (c) at (-1,1) { $s_1=1 $};
            \node[state]            (d) at (1,1)  {$s_2=1$};

            \draw[->] (b) edge (c);
            \draw[->] (d) edge (b);
             \draw[->] (c) edge (d);
        \end{tikzpicture}
        \end{center}
\caption{An automaton that cannot be simulated by a system with varying delays.}
\label{fig:delay_ex}
\end{figure}
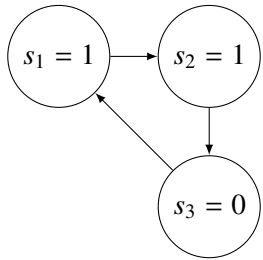
Clearly, \(\lA =\{ (0, 1, 1, 0 , 1 , 1, 0, 1, 1,\ldots), (1, 1, 0 , 1 , 1, 0, 1, 1,0,1,1, \ldots),\) \((1, 0 , 1 , 1, 0, 1, 1,0,1,1, \ldots)\}\).  We claim that there is no system with varying delays that generates a language of actuation signals equal to $\lA .$  To see this, observe that if the delay signal is constant (i.e., the delay does not change with time), the actuation signal is asymptotically equal to $(1,1,1,\dots),$ which is obviously not in $\lA .$
Due to this structure this automaton cannot be captured as a varying delay system.
\end{example}

\begin{theorem}
Let $(A,B,\cD)$ represent a system as in Definition~\ref{def-contr-matrix-delays}.  One can construct an automaton $\cA$ with the following property: the system $( A,B,\cA)$ is controllable (in the sense of Definition \ref{d:all-defn2}) if and only if the system \((A,B,\cD)\) is controllable (in the sense of Definition \ref{defi-contr-delays}).
\end{theorem}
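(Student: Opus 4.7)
The plan is to construct an automaton $\cA$ whose admissible language $\lA$ is, up to a pointwise dominance relation, the same as the family $\{\tau_d : d:\nat\to\cD\}$ of actuation signals defined in \eqref{eq-actuation-signal}, and then to transfer controllability via the identity \eqref{eq-link} combined with the reachability-based characterisations in Propositions~\ref{p:reach_rank} and~\ref{prop-contr-delays}.

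Set $D := d_{\max}+1$ with $d_{\max}=\max\cD$. I would take as nodes $V^\star := \{v\in\{0,1\}^D : v[d]=1 \text{ for some } d\in\cD\}$, with labels $s_v := v[0]$ (zero-indexed). A transition $v\to v'$ is admitted whenever there is a $\delta\in\cD$ such that $v'$ is the left shift of $v$, padded with $0$, into which an extra $1$ has been written at coordinate $\delta$. The intended semantics is that $v[j]=1$ means ``a packet is already scheduled to arrive $j$ steps from now,'' so that $s_v$ really is the current actuation flag. The restriction defining $V^\star$, rather than keeping all of $\{0,1\}^D$, is essential and is revisited below.

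The technical backbone of the proof is a two-sided compatibility between $\lA$ and $\{\tau_d\}$. On one hand, every $\tau_d$ lies in $\lA$: the sequence of post-decision states $v(t)$ along $d$, with $v(t)[j]=1$ iff some $t'\leq t$ satisfies $t'+d(t')=t+j$, is a legal infinite path in $V^\star$ with $s_{v(t)}=\tau_d(t)$. On the other hand, for every $\sigma\in\lA$ arising from a path $v(0),v(1),\ldots$ in $V^\star$ with transition decisions $\delta_1,\delta_2,\ldots\in\cD$, I would exhibit a $d:\nat\to\cD$ with $\tau_d\leq\sigma$ pointwise. One sets $d(t):=\delta_t$ for $t\geq 1$, and uses the very definition of $V^\star$ to choose $d(0)\in\cD$ with $v(0)[d(0)]=1$; then unrolling the recursion $v(t)[0] = v(t-1)[1]\lor\mathbf{1}[\delta_t=0]$ for $D$ steps and comparing with the explicit formula $\tau_d(t) = \bigvee_{t'\leq t}\mathbf{1}[d(t')=t-t']$ gives $\tau_d(t)\leq\sigma(t)$ for every $t\in\nat$.

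Given this compatibility, the equivalence is routine transport. If $(A,B,\cD)$ is controllable, then Proposition~\ref{prop-contr-delays} together with \eqref{eq-link} yields that $C_{\tau_d}(t)$ eventually has full rank for every $d$; since every $\sigma\in\lA$ dominates some $\tau_d$, the nonzero columns of $C_{\tau_d}(t)$ sit inside those of $C_\sigma(t)$, so $\imag C_\sigma(t)\supseteq\imag C_{\tau_d}(t)$ eventually fills $\ree^n$, and Propositions~\ref{p:reach_rank} and~\ref{p:reach_and_contr} deliver controllability of $(A,B,\cA)$. Conversely, since every $\tau_d$ is in $\lA$, controllability of $(A,B,\cA)$ directly forces each $C_{\tau_d}$, hence (by \eqref{eq-link}) each $\bar C_d$, to be eventually of full rank, and Proposition~\ref{prop-contr-delays} concludes. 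The main obstacle, and the one step I expect to require real thought rather than bookkeeping, is the second half of the compatibility claim: a carelessly chosen node set admits ``phantom'' initial states corresponding to packet histories no legal delay signal can produce, and some such phantom signals fail to dominate any $\tau_d$, which would destroy the $\Leftarrow$ direction. The restriction to $V^\star$ is precisely what forces the needed $d(0)$ to exist.
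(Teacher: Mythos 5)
Your proof is correct, but it uses a genuinely different automaton than the paper's. The paper builds the De Bruijn graph over $\cD^{\dmax+1}$, so each node stores the last $\dmax+1$ \emph{delay values}; the resulting language is exactly the set of actuation signals shifted by $\dmax$ steps, and the proof then compares $\imag C_\sigma(t)$ with $A^{\dmax}\,\imag \bar C_d(t)$ to absorb that shift. You instead store only the Boolean vector of \emph{pending arrivals}, which embeds every $\tau_d$ into $\lA$ with no time shift but at the price of admitting extra signals from ``phantom'' initial nodes; you then close the gap with the pointwise-dominance argument $\tau_d\leq\sigma$ together with the monotonicity of $\imag C_\sigma(t)$ in $\sigma$. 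Both mismatch-repair mechanisms are sound (prepending $A^{\dmax}$ cannot increase rank; adding $1$'s to $\sigma$ can only enlarge the column span), and you correctly identified the one point that needs care, namely that an unrestricted node set $\{0,1\}^{\dmax+1}$ would admit initial states (e.g.\ the all-zero history, or a lone $1$ at a coordinate outside $\cD$) for which no admissible $d(0)\in\cD$ exists, breaking the dominance claim --- your restriction to $V^\star$ and the verification that transitions preserve $V^\star$ fix this. A side benefit of your construction is that the automaton has at most $2^{\dmax+1}$ nodes independently of $|\cD|$, versus $|\cD|^{\dmax+1}$ for the De Bruijn graph, though both are exponential in $\dmax$.
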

\begin{proof}
We have seen in Proposition \ref{prop-contr-delays} that the uncontrollability of system \eqref{system-varying-delays} boils down to the existence of a delay signal $d:\nat\rightarrow\cD$ such that the controllability matrix $\bar C_d(t)$ has rank smaller than $n$ for all $t\in\nat.$   That is, we want to know whether for all $t,$  $\imag\bar C_d(t)$, which is equal to
\begin{equation}\label{eq-transfo}
\imag\{A^{t-t'-d(t')-1}B:\, t'\in\nat,\ t'+d(t')+1 \leq t\},
\end{equation} is not equal to $\ree^n$.  \rmj{This latter equation bears similarity with our definition \eqref{e:reach_mat} of reachability matrix, and suggests to build a dropout system $(A,B,\cA),$ such that the possible controllability matrices have block-colums as in \eqref{eq-transfo}.}  \rmjjj{More precisely, we will build an automaton $\cA$ such that $\cL(\cA)$ is almost equal to the set of possible actuation signals $\tau_d$ (they only differ at a few initial letters, but we show below that nevertheless, one system is controllable if and only if the other is). 

We now build this automaton. }It corresponds to the so-called \emph{De Bruijn graph} of the sequences on the alphabet $\cD$, which can be constructed as follows \rmjjj{(we note $d_{max}$ for the largest delay in $\cD$)}. 
The nodes correspond to the vectors in $\cD^{d_{max}+1},$ and for each node $v=(v_0,\ldots,v_{d_{max}}),$ and each delay $d\in \cD,$ \rmj{there is an edge labeled with $d$ pointing from $v$ to the node $v'=(d,v_0,\dots,\bar v_{d_{max}-1}).$}  Each \rmjjj{node} represents the last values of the delay signal at a certain time  and from this information one can easily compute if the {actuation} signal at this node is equal to zero or one.  More precisely:
\begin{equation}
\label{e:node_label_bruijn} s_v =1  \text{ iff }  \exists i\in \nat_{[0,\dmax]} \text{ with }v_i=i.\end{equation}
%

It remains to prove that the constructed system $\sysc$ is controllable if and only if the  delay system  \((A,B,\cD)\)  is controllable.  
{\bf If:} Let us take such an unobservable delay signal $d:\ \n \rightarrow \cD.$  We start from the vertex corresponding to the vector $(d(d_{max}),d(d_{max}-1),\ldots,d(0)),$ and then follow the path with edges labeled $d(\dmax +1),d(\dmax +2),\ldots.$
Hence, the node $v(t)$ in the De Bruijn graph $\cA$ at time  $t\in\nat$ is given by $v(t) = (d(t+d_{max}),d(t+ d_{max}-1),\ldots,d(t)),$ and
for any $t\geq 0,$ the corresponding packet loss signal is given by $ \sigma(t)=\tau_d(t+d_{max})$ with $\tau_d$ the actuation signal corresponding to $d$ (see \eqref{eq-actuation-signal}).  Thus (see \eqref{e:node_label_bruijn}),
$$\sigma(t)=1 \Leftrightarrow \exists s \in \nat_{[t,\dmax+t]}: \, d(s)=t+\dmax-s,$$
{and from \eqref{e:reach_mat}, the image of the reachability matrix
$C_\sigma(t)$ at an arbitrary time $t\in\nat$ is included in }

\begin{equation} \label{temp1}
\imag\{A^{t-t'-d(t')-1+d_{max}}B: t\leq t' \leq t+d_{max}\}.
\end{equation}
This set is exactly $A^{\dmax}\imag(\bar C_{d}(t))$ with $\bar C_{d}(t)$ is the controllability matrix determining the controllability of our system with varying delays (see \eqref{eq-transfo}), which is never of rank $n$ by hypothesis. Hence, we have found a packet loss signal $\sigma: \nat \rightarrow \{0,1\}$ in $\lA$ such that the reachability matrix has never full rank and thus the system is not controllable.\\
{\bf Only if:}
Conversely, if there is a data loss signal $\sigma\in\lA$ such that the corresponding reachability matrix $C_\sigma(t)$ is never of rank $n,$ the corresponding delay signal $d$ obtained by the successive labels $d_{i_1},\dots,d_{i_t}$ on the edges of $\sigma$ leads to a controllability matrix (see \eqref{eq-transfo} again) with
$$ \imag\bar C_d(t)=\imag\{A^{t-t'-d(t')-1}B:\, t'\geq 0,\ t-t'-d(t')-1\geq 0\}.$$ All these {colums} in this expression are colums of the reachability matrix \eqref{e:reach_mat} corresponding to the path $\sigma$ (see our definition of the labeling of the nodes of the automaton as in \eqref{e:node_label_bruijn}), and so the matrix cannot be of rank $n.$
%
\end{proof}

We conclude this subsection by noting that this result gives an algorithm for deciding controllability of systems with varying delays: Simply construct the automaton $\cA$ described in the above proof, and decide \rmj{controllability of the system $(A,B,\cA)$} with the techniques presented in this paper.
We note, however, that the alternative technique presented in \cite{jdd14} seems more advantageous, since: 1. It does not require to build an auxiliary automaton, whose size can be exponential in the size of the initial problem and 2. in \cite{jdd14}, the total computational time of the algorithm is bounded w.r.t. the size of the initial \rmj{problem, as a function that does not depend on the numerical values of the entries of $A.$ Thus, the algorithm developed here might not be competitive in terms of computational time with the one previously introduced}.  

\rmj{\subsection{The \rmjjj{particular case of maximal consecutive dropouts}} \label{subsec:dropouts2delays} As we have just seen in the previous subsection, the framework of this paper (where nonidealities are represented by an automaton) is more general than systems with switching delays as in \eqref{system-varying-delays}.  However, in this subsection we study the particular case where the automaton represents the constraint that the actuation signal cannot contain more than $N$ consecutive dropouts (like in Example \ref{ex-maxdropouts}).
We show that in this particular case, the controllability problem can be restated as a controllability problem with switching delays. That is, the switching delays model encapsulates the constraint of maximal consecutive dropouts.}  As a consequence, controllability with a maximum number of dropouts in a row can be solved with the methodology developed there, for which a better bound on the running time  of the algorithm is available. 
%
However, we attract the attention of the reader on the fact that the techniques developed in \cite{jungers2014further} can only be applied for the very particular case where the automaton is of the form in Fig. \ref{fig:automat_ex} (i.e., when the constraint on the data loss signal is a bound on the number of consecutive losses).  The case with an arbitrary automaton is much more general and powerful (in terms of modeling of a data losses signal).

 \begin{theorem}
Let us consider a system $(A,B,\cA)$ with at most $N$ consecutive dropouts  (like defined in Example \ref{ex-maxdropouts}) for some $N\in \nat.$  The system $(A,b,\cA)$ is controllable if and only if the delay system $(A,B,\cD)$ with $\cD=\{0,1,\dots,N\},$ is controllable.
\end{theorem}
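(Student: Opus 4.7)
The plan is to reduce this theorem, via the identity $\imag C_{\tau_d}(t) = \imag \bar C_d(t)$ recorded in \eqref{eq-link}, to an identification of the two relevant signal sets: the language $\lA$ of the maximum-$N$-consecutive-dropouts automaton, and the set of actuation signals $\{\tau_d \mid d:\nat\to\{0,1,\ldots,N\}\}$ arising from delay signals valued in $\cD$. Once this set identity is in hand, the theorem will follow from the rank-based characterisations of controllability given by Proposition \ref{p:reach_rank} together with Proposition \ref{p:reach_and_contr}, and by Proposition \ref{prop-contr-delays}.

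I would first dispose of the easy inclusion: every actuation signal $\tau_d$ with $d:\nat\to\{0,1,\ldots,N\}$ belongs to $\lA$. Indeed, for any $s\in\nat$ the packet $u(s)$ arrives at time $s+d(s)\in\{s,s+1,\ldots,s+N\}$, so $\tau_d$ equals $1$ at some point in every window of length $N+1$, which rules out $N+1$ consecutive zeros and places $\tau_d$ in $\lA$.

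The main obstacle is the reverse inclusion: producing, for each $\sigma\in\lA$, a delay signal $d$ whose actuation signal is exactly $\sigma$. I would construct $d$ greedily by
\begin{equation*}
d(t'):=\min\{k\in\{0,1,\ldots,N\}\mid \sigma(t'+k)=1\}\qquad\text{for every }t'\in\nat,
\end{equation*}
which is well-defined precisely because $\sigma$ contains no window of $N+1$ consecutive zeros. By construction $\sigma(t'+d(t'))=1$ for every $t'$, so $\tau_d(s)=1$ implies $\sigma(s)=1$. Conversely, if $\sigma(s)=1$ then choosing $t'=s$ forces $d(s)=0$, so $s$ lies in the image of $t'\mapsto t'+d(t')$ and $\tau_d(s)=1$; hence $\tau_d=\sigma$.

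With the identification of $\lA$ as the set of actuation signals established, the two implications of the theorem will follow at once. For the ``if'' direction, given $\sigma\in\lA$, produce $d\in\cD^{\nat}$ with $\tau_d=\sigma$; controllability of $(A,B,\cD)$ supplies $T\in\nat$ such that $\bar C_d(T)$ has full rank, and \eqref{eq-link} transfers this to $C_\sigma(T)$. For the ``only if'' direction, given any $d:\nat\to\cD$, set $\sigma:=\tau_d\in\lA$; controllability of $(A,B,\cA)$ supplies $T$ with $C_\sigma(T)$ of full rank, which via \eqref{eq-link} yields $\bar C_d(T)$ of full rank. Propositions \ref{p:reach_rank}, \ref{p:reach_and_contr} and \ref{prop-contr-delays} then deliver the equivalence of the two notions of controllability.
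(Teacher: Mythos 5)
Your proposal is correct and follows essentially the same route as the paper: both reduce the theorem to showing that the language of admissible dropout signals with at most $N$ consecutive losses coincides with the set of actuation signals $\tau_d$ for $d:\nat\to\{0,1,\dots,N\}$, using the same greedy construction $d(t)=\min\{k\mid\sigma(t+k)=1\}$ for the nontrivial inclusion and then invoking the rank characterisations of controllability. Your explicit verification that $\tau_d=\sigma$ is a small improvement in rigour over the paper's bare assertion, but the argument is the same.
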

\begin{proof}First, observe that in both Definitions \ref{d:reach_mat} and \ref{def-contr-matrix-delays}, the columns appearing in the controllability matrix are uniquely defined by the packet loss signal $\sigma$ (respectively the actuation signal $\tau_d$), which characterizes the set of times where  $B$ enters the controllability matrix. 

 Hence, if we can show that the language of actuation signals $\sigma$ in the max-$N$-consecutive-dropouts case, and the language  of actuation signals $\tau_d$ induced by delay signals $d:\nat \rightarrow \cD$ in the varying delay case with $\cD=\{0,1,\dots,N\}$ are the same, we have established the proof of the theorem.  Indeed, there would then exist an actuation signal $\sigma$ which is uncontrollable in the max-$N$-consecutive-dropouts case if and only if there exists an uncontrollable delay signal $d:\nat \rightarrow \cD$ in the switching delays case.

To show that the languages are indeed equal, first take any signal $\sigma:\nat \rightarrow \{0,1\}$ satisfying the max-$N$-consecutive-dropouts constraint, that is
\begin{equation}\label{eq-construction-equiv}\forall t\in\nat, \exists t'\in \nat_{[t,t+N]} \text{ such that }  \sigma(t')=1.\end{equation}
The signal $\sigma$ is  the same as  $\tau_d$ corresponding to the delay signal $d:\nat \rightarrow \nat$ given for $t\in\nat$  by $$ d(t)=\min_{t'\geq t, \sigma(t')=1}(t'-t).$$  By  \eqref{eq-construction-equiv}, the above defined signal $d(\cdot{})$ is admissible in the sense that $d:\nat\rightarrow D$ with   $D=\{0,1,\dots,N\}$, as the right-hand side in the equation above is always smaller or equal to $N.$

Second, take a signal $d:\, \n \rightarrow \{0,1,\dots,N\}.$ It obviously incurs an actuation signal $\tau_d$ with at most $N$ zeros in a row, which is clearly contained in the set of admissible signals $\sigma$ for the max-$N$-consecutive-dropout case.  
\end{proof}

\section{Numerical example}
\label{s:num_ex}
    Consider a discrete-time linear system subject to data losses \eqref{e:swsys1} with
    \[
        A = \pmat{-2 & -13 & 9\\-5 & -10 & 9\\-10 & -11 & 12}\quad\text{and}\quad C = \pmat{1 & 2 & 3}.
    \]
    The observability matrix in the standard sense (with \(\sigma(t) = 1\) for all \(t\in\nat\)) of the discrete-time linear system is
    \[
        O(A,C) = \pmat{C \\ CA \\CA^2} = \pmat{1 & 2 & 3\\-42 & -66 & 63\\-216 & 513 & -216},
    \]
    the rank of which is 3. Consequently, the pair \((A,C)\) is observable in the standard sense.

    Now consider a data loss signal \(\sigma\) such that maximum two consecutive data losses are allowed. The corresponding automaton \(\cA = (M,s)\) with
    \[
        M = \pmat{0 & 1 & 1\\0 & 0 & 1\\1 & 0 & 1}\quad s = \pmat{0\\0\\1}
    \]
    is shown in Figure \ref{fig:num_automaton}.
    \begin{figure}[h!]
     \begin{center}
        \begin{tikzpicture}[every path/.style={>=latex},every node/.style={auto}]
            \node[state]            (b) at (-1,1)  { $s_1=0$ };
            \node[state]            (a) at (1,0)  { $s_2=0$ };
            \node[state]            (c) at (-1,-1) { $s_3=1$ };

            \draw[->] (c) edge[bend left] (b);
            \draw[->] (b) edge[bend left] (c);
            \draw[->] (b) edge (a);
            \draw[->] (a) edge (c);
            \draw[->] (c) edge[loop left] (c);
        \end{tikzpicture}
        \end{center}
        \caption{Automaton representing data loss signals with at most 2 consecutive losses.} \label{fig:num_automaton}
     \end{figure}
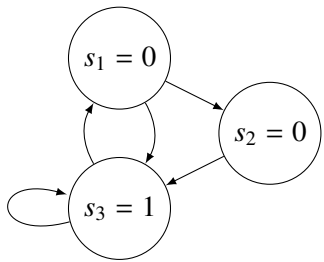

    The problem under consideration is to decide whether the system \((A,C,\cA)\) is (un)observable. To this end, our algorithm checks unobservability of periodic signals corresponding to cycles of increasing length. It is easy to verify (once the signal has been found) that the system \((A,C,\cA)\) is unobservable under the periodic data loss signal \(\sigma = (1,0,0,1,0,0,1,0,0,1,\ldots)\).

    Now let us modify the constraint on the data loss signal. Consider the case where the control and communication system is constructed so as to protect itself against disruptions, for example, such that at most one packet can be lost in a period of $m$ consecutive steps (or stated otherwise, at least $m-1$ successful transmissions take place in every $m$ steps, which is $(m,m-1)$-firmness \cite{horssen_ecc16}. 
    It turns out that in this case the system is observable (as soon as $m\geq 2$).  Indeed, for $m=2,$ the automaton \(\cA = (M,s)\) capturing the above constraint is given below \cite{horssen_ecc16}. 
    \[
        M = \pmat{0 & 1\\1 & 1}\quad\text{and}\quad s = \pmat{0\\1}.
    \]
    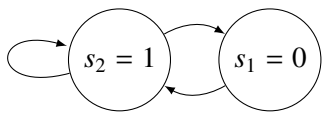
\begin{figure}[h!]
     \begin{center}
        \begin{tikzpicture}[every path/.style={>=latex},every node/.style={auto}]
            \node[state]            (b) at (1,1)  { \(s_{1} = 0\) };
            \node[state]            (c) at (-1,1) { \(s_{2} = 1\) };

            \draw[->] (c) edge[bend left] (b);
            \draw[->] (b) edge[bend left] (c);
            \draw[->] (c) edge[loop left] (c);
        \end{tikzpicture}
        \end{center}
        \caption{Automaton representing data loss signals with at most 1 loss at a time.} \label{fig:num_automaton2}
     \end{figure}
It is easy to see that the system is observable, because every finite signal of length $6$ leads to a full rank observability matrix.  Clearly, if the system is observable with $m=2,$ it is observable for any $m\in \nat_{>2}.$

		This leads to the question of how fast our algorithm can decide (un)observability for a given \((A,C,\cA)\).
		Recall that the proposed algorithm checks (un)observability of each periodic signal corresponding to cycles of length \(s\), \(s = 1,\ldots,PN\). If all these cycles lead to observable signals, then the system is observable. However, in practice, it may be possible to conclude that a system is observable much earlier than checking all cycles up to length \(PN\) . In particular, it may be possible to conclude about (un)observability of a cycle from the properties of other cycles, which reduces the total number of signals for which the (un)observability is checked. For example, in the case presented above, the observability of the signal \(\sigma = (1,0,1,0,1,\ldots)\) leads to observability of the signal \(\sigma = (0,1,0,1,0,1,\ldots)\). Research towards reducing the number of switching signals for which (un)observability needs to be checked, is currently an open direction.
		
\section{Conclusion}
\label{s:concln}
{In this paper, our goal was to provide a theoretical analysis of how classical control-theoretic notions are modified by typical non-idealities that are more and more present in modern control, as for instance in wireless control networks.  From a formal point of view, the problems we are trying to solve are extremely hard, and even undecidable for arbitrary switched systems.  However, our systems, while formally falling in the class of switched systems, are not arbitrary, and their algebraic structure actually allows to retrieve (part of) the powerful properties that are so useful for classical LTI systems. We have shown that packet dropouts (under constraints represented by an automaton), while formally transforming the systems into switched systems, keep enough algebraic structure so that fundamental questions like controllability or observability can still be answered.
As we have seen, the non-idealities under study in this paper are quite general, and they even encapsulate previously studied models, like switching delays.

In further work, we plan to study models where observability and controllability interact in the same feedback loop.  Also, concerning the controllability problem, our work leaves natural open questions; for instance, how to choose the control signal in real time?  Indeed, remark that, in cases where our algorithm returns a positive answer, we know that \emph{there exists} a control signal that allows to stabilize the plant, but nothing is said about how to chose the optimal signal.  It is clear from our results how to chose this signal \emph{a posteriori}, i.e., once we know the dropout signal that effectively happened.  However, in several practical situations, it does not make sense to assume that the disruptions in the feedback loop are known beforehand, and thus the controller has to `guess' the optimal control value independently of the dropout signal.  From this point of view, our results can be viewed as necessary conditions for controllability (or stabilizability, or...), which are not sufficient in the case of real-time control (note that the observability problem does not suffer this limitation, since the essence of observability is that one recovers the state a posteriori).}
	
\bibliographystyle{siam}
\bibliography{refr_dropout}
\end{document}